\theoremstyle{plain}
\newtheorem{theorem}{Theorem}[section]
\newtheorem{lemma}[theorem]{Lemma}
\newtheorem{proposition}[theorem]{Proposition}
\newtheorem{corollary}[theorem]{Corollary}
\theoremstyle{definition}
\newtheorem{definition}[theorem]{Definition}
\newtheorem{question}[theorem]{Question}
\theoremstyle{remark}
\newtheorem{remark}[theorem]{Remark}
\newtheorem{construction}[theorem]{Construction}
\newtheorem*{To show}{To show}
\title{Critical exponents of boomerang subgroups in the free group}
\author{Waltraud Lederle}
\date{\today}
\let\epsilon\varepsilon
\renewcommand\emptyset\varnothing
\let\phi\varphi
\newcommand{\tree}{\mathcal T}
\newcommand{\Sub}{\mathrm{Sub}}
\newcommand{\SL}{\mathrm{SL}}
\newcommand{\Iso}{\mathrm{Iso}}
\newcommand{\Sch}{\mathrm{Sch}}
\newcommand{\Core}{\mathrm{C}}
\newcommand{\Gr}{\mathrm{G}}
\newcommand{\Boom}{\mathrm{Boom}}
\newcommand{\N}{\mathbb N}
\newcommand{\frakg}{\mathfrak{g}}
\begin{document}

	\maketitle
	\begin{abstract}
		We construct, for the free group acting on its Cayley tree, boomerang subgroups whose critical exponent is arbitrarily close to the critical exponent of a given finitely generated subgroup.
	\end{abstract}

\section{Introduction}

Let $(X,d_X)$ be a proper geodesic Gromov hyperbolic metric space and $\Iso(X)$ its group of isometries. For a discrete subgroup $\Gamma \leq \Iso(X)$, the \emph{critical exponent} of $\Gamma$ is defined by
$
\delta(\Gamma) := \inf\big\{s > 0 \, \big| \, \sum_{\gamma \in \Gamma} e^{-s \cdot d_X(x,\gamma(x))} < \infty \big\}, 
$
it is independent of the choice of $x \in X$.
Gekhtman--Levit gave the following lower bound for critical exponents of invariant random subgroups.

\begin{theorem}[{\cite[Thm. 1.1]{GekhtmanLevit2019}}]\label{thm:GekhtmanLevit}
	Let $X$ be a proper geodesic Gromov hyperbolic metric space and let $G < \Iso(X)$ be a closed, non-elementary subgroup acting cocompactly on $X$ and admitting a uniform lattice. Let $\mu$ be an invariant random subgroup on $G$ such that $\mu$-almost every subgroup is discrete and infinite. Then, $\mu$-almost every $H \leq G$ satisfies:
	\begin{enumerate}
		\item $\delta(H) > \frac{1}{2} \dim_H(\partial X)$, and
		\item if $H$ is of divergence type, then $\delta(H)=\dim_H(\partial X)$.
	\end{enumerate}
\end{theorem}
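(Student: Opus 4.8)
The plan is to run a Patterson--Sullivan orbit-counting argument in which the conjugation-invariance of $\mu$ plays the role that normality plays in the classical comparison $\delta(N) \ge \tfrac12 \delta(\Gamma)$ for a normal subgroup $N \triangleleft \Gamma$. Fix a basepoint $o \in X$. Because $G$ acts cocompactly and admits a uniform lattice $\Gamma_0$, Coornaert's theorem supplies an Ahlfors-regular Patterson--Sullivan measure $\nu$ on $\partial X$ of dimension $D := \dim_H(\partial X)$, with $D = \delta(\Gamma_0)$. The main quantitative tool throughout is Sullivan's shadow lemma, which for a $D$-conformal density gives $\nu(\mathrm{shadow}(go, r)) \asymp e^{-D\, d_X(o, go)}$ and thereby converts orbit counting into boundary measure estimates.

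For part (1) I would first prove the non-strict bound $\delta(H) \ge \tfrac12 D$ for $\mu$-a.e.\ $H$. In the normal-subgroup case one compares the Poincar\'e series $P_H(s) = \sum_{h \in H} e^{-s\, d_X(o, ho)}$ to the divergent series $\sum_{g \in \Gamma_0} e^{-D\, d_X(o, go)}$ through a coset pairing and a Cauchy--Schwarz estimate that is responsible for the factor $\tfrac12$; normality is what keeps the paired elements inside the subgroup. With no normality available, I would instead exploit that conjugation-invariance of $\mu$, combined with the unimodularity of $G$ (which follows from the existence of a lattice), makes the space $\bigl(\Sub(G) \times X,\ \mu \times \mathrm{vol}\bigr)$ into a measure-preserving system satisfying the mass transport principle. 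Averaging the pairing estimate over this system gives $\E_\mu[P_H(s)] = \infty$ for every $s < \tfrac12 D$, whence $\delta(H) \ge \tfrac12 D$ almost surely.

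Upgrading this to the strict inequality is where I expect the real difficulty. By conjugation-invariance and ergodic decomposition I may assume $\mu$ ergodic, so $\delta(H)$ is $\mu$-a.s.\ a constant; suppose for contradiction it equals $\tfrac12 D$. This is precisely the borderline ``amenable'' regime, in which the bottom of the $\ell^2$-spectrum of the random Schreier graph $H \backslash X$ attains the ambient value $D^2/4$ with no gap. I would argue that such a spectral coincidence forces an almost-invariant, F{\o}lner-type sequence in $H \backslash X$ for $\mu$-a.e.\ $H$, i.e.\ co-amenability of the IRS on average, and then show this is incompatible with $H$ being a.s.\ infinite and discrete inside the non-elementary $G$: concretely, the averaged return probability of the random walk on $H \backslash X$ would have to decay at the maximal rate $e^{-(D^2/4) t}$, which the recurrence supplied by the mass transport principle rules out unless $H$ is of divergence type. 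Isolating and proving this rigidity dichotomy --- strict gap versus divergence type --- is the crux, and is the step that genuinely uses the IRS hypothesis rather than mere discreteness.

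For part (2) I would assume $H$ of divergence type a.s.\ and show its limit set is $\nu$-conull. The divergence-type hypothesis makes the $H$-Patterson--Sullivan measure non-atomic and $\delta(H)$-conformal, and a Borel--Cantelli argument with the shadow lemma shows that $\nu$-a.e.\ boundary point is radially recurrent for $H$, hence lies in $\Lambda(H)$. The assignment $H \mapsto \Lambda(H)$ is equivariant, so the set $\{(H,\xi) : \xi \in \Lambda(H)\}$ is $G$-invariant in $\Sub(G) \times \partial X$; ergodicity of the $G$-action on $(\partial X, \nu)$ then forces $\Lambda(H)$ to be $\nu$-conull. Finally I would compare the two conformal densities: a $\delta(H)$-dimensional density with conull support against the Ahlfors $D$-regular $\nu$. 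If $\delta(H) < D$ the former would be singular with respect to $\nu$ by the shadow lemma, contradicting conull support together with divergence-type recurrence; hence $\delta(H) = D = \dim_H(\partial X)$ almost surely.
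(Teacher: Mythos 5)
This statement is not proved in the paper at all: it is quoted verbatim from Gekhtman--Levit \cite{GekhtmanLevit2019} and used as a black box (only to deduce Corollaries \ref{cor:confined} and \ref{cor:irs supported}). So there is no internal proof to compare your proposal against, and any assessment has to be of your outline on its own terms relative to the actual Gekhtman--Levit argument.

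Your general framing --- Coornaert's $D$-Ahlfors-regular Patterson--Sullivan density for the cocompact action, the shadow lemma as the quantitative engine, and conjugation-invariance of $\mu$ standing in for normality --- is the right territory. But two steps, as written, are gaps rather than proofs. First, the upgrade from $\delta(H)\ge \tfrac12 D$ to the strict inequality: you propose that equality would force the bottom of the $\ell^2$-spectrum of $H\backslash X$ to equal $D^2/4$, hence a F{\o}lner sequence, hence a contradiction with $H$ infinite and discrete. The spectral correspondence $\lambda_0 = \delta(D-\delta)$ is only valid in the regime $\delta \ge D/2$ (you are sitting exactly at the borderline), the implication from spectral coincidence to co-amenability is a Brooks-type theorem with its own hypotheses, and co-amenability is in any case not obviously incompatible with $H$ being infinite and discrete --- for normal subgroups, amenable quotient forces $\delta(N)=D$, not a contradiction. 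The actual Gekhtman--Levit mechanism is different: they use Poincar\'e recurrence of the $\mu$-stationary foliated geodesic flow to show that for $\mu$-a.e.\ $H$ and $\nu$-a.e.\ $\xi$ the projected geodesic ray recurs to a fixed compact part of $H\backslash X$, and derive the strict inequality from a quantitative orbit-counting consequence of that recurrence; no spectral argument is involved. Second, in part (2) your appeal to ``ergodicity of the $G$-action on $(\partial X,\nu)$'' to conclude that $\Lambda(H)$ is $\nu$-conull is a non sequitur: the invariant set $\{(H,\xi): \xi\in\Lambda(H)\}$ lives in $\Sub(G)\times\partial X$, and ergodicity on the boundary factor alone says nothing about its fibers; you would need ergodicity of the diagonal action on $(\Sub(G)\times\partial X,\mu\times\nu)$, or (as in the actual proof) the pointwise recurrence statement above, which already places $\nu$-a.e.\ $\xi$ in the radial limit set of a.e.\ $H$. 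Your Borel--Cantelli claim that $\nu$-a.e.\ point is radially recurrent for $H$ is exactly the assertion that requires the IRS hypothesis, and your sketch derives it only from divergence type, which is false for general divergence-type subgroups (they can have small limit sets). Once that recurrence is in hand, the Hopf--Tsuji--Sullivan dichotomy plus the shadow-lemma comparison of conformal densities does finish part (2) roughly as you describe.
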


In a later paper \cite{GekhtmanLevit2023} they give a similar lower bound on critical exponents of stationary random subgroups.

In \cite{Glasner2017,GlasnerLederle2025} the authors introduced boomerang subgroups of countable groups. For a countable group $\Gamma$, we denote its set of subgroups by $\Sub(\Gamma)$ and endow it with the subspace topology inherited from the obvious embedding $\Sub(\Gamma) \subset \{0,1\}^\Gamma$. With this topology $\Sub(\Gamma)$ is called the \emph{Chabauty space} of $\Gamma$.
Note that the set of finitely generated subgroups is dense in the Chabauty space, because for every $H \in \Sub(\Gamma)$ we can enumerate its elements as $H = \{h_1,h_2,h_3,\dots\}$ and then $H = \lim_{n \to \infty} \langle h_1,\dots,h_n \rangle$.
The group $\Gamma$ acts on the compact, metrizable, totally disconnected space $\Sub(\Gamma)$ continuously via conjugation.
A \emph{boomerang subgroup} of $\Gamma$ is a subgroup $\Delta \in \Sub(\Gamma)$ such that, for every $g \in \Gamma$, there exists a sequence $(n_k) \to \infty$ with $\lim_{k \to \infty} g^{n_k} \Delta g^{-n_k} = \Delta$.
An \emph{invariant random subgroup} of $\Gamma$ is a conjugacy-invariant, Borel probability measure $\mu$ on $\Sub(\Gamma)$.
As a simple consequence of the Poincar\'e recurrence theorem, given any invariant random subgroup $\mu$ of a countable group $\Gamma$, $\mu$-almost every subgroup is a boomerang subgroup of $\Gamma$.
This motivates the question which properties that hold almost surely for invariant random subgroups remain true for boomerang subgroups. 
In this note, we show that the lower bound on the critical exponent does not.

\begin{theorem}\label{thm:main}
	Let $F_d$ be the free group of rank $d$ acting on its Cayley tree.
	Let $\Gamma \leq F_d$ be a finitely generated subgroup, $O \subset \Sub(F_d)$ a neighborhood of $\Gamma$ and $\epsilon > 0$.
	Then, there exists a boomerang subgroup $\Delta \in \Sub(F_d)$ with $\Delta \in O$ and $\delta(\Gamma) \leq \delta(\Delta) \leq \delta(\Gamma) + \epsilon$.
\end{theorem}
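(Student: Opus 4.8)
The plan is to realize $\Delta$ as a single ``base copy'' of $\Gamma$ sitting at the identity together with infinitely many long, sparse conjugate copies of $\Gamma$, arranged so that the whole configuration is recurrent along the axis of every element of $F_d$.

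First I would reduce to a clean combinatorial target. Writing $B_R$ for the ball of radius $R$ in the word metric, the Chabauty topology on $\Sub(F_d)$ has basic neighborhoods of the form $\{\Delta : \Delta \cap B_R = \Gamma \cap B_R\}$, so after fixing $R_0$ with $\{\Delta : \Delta \cap B_{R_0} = \Gamma \cap B_{R_0}\} \subseteq O$ it suffices to build a boomerang subgroup $\Delta$ with $\Gamma \subseteq \Delta$, with every element of $\Delta \setminus \Gamma$ of length $> R_0$, and with $\delta(\Delta) \le \delta(\Gamma) + \epsilon$. The inclusion $\Gamma \subseteq \Delta$ yields the lower bound $\delta(\Delta) \ge \delta(\Gamma)$ for free, and together with the length condition it gives $\Delta \cap B_{R_0} = \Gamma \cap B_{R_0}$, hence $\Delta \in O$. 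Thus the entire difficulty lies in satisfying the boomerang condition while keeping $\delta$ from increasing by more than $\epsilon$. (Recall that a soft, measure-theoretic route is unavailable here: by Theorem~\ref{thm:GekhtmanLevit} any invariant random subgroup of $F_d$ has $\delta \ge \tfrac12\log(2d-1)$ almost surely, so when $\delta(\Gamma)$ is small no invariant random subgroup can be supported near $\Gamma$, and the boomerang must be produced by hand.)

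For the construction I would enumerate $F_d \setminus \{e\} = \{g_1,g_2,\dots\}$ and proceed scale by scale, fixing radii $R_0 \ll R_1 \ll R_2 \ll \cdots$ growing extremely fast. At stage $k$, along the axis of $g_k$, place the copies $g_k^{m}\Gamma g_k^{-m}$ for $m$ in a lacunary set $M_k$ whose smallest element is large enough that all of them lie outside $B_{R_{k-1}}$ and whose consecutive gaps diverge faster than the matching radii used below, and set $\Delta := \langle \Gamma \cup \bigcup_k \{g_k^{m}\Gamma g_k^{-m} : m \in M_k\}\rangle$. To check the boomerang property for $g=g_k$, take $n_j \to \infty$ running through $M_k$: conjugation by $g_k^{n_j}$ carries the copy placed at $g_k^{n_j}$ exactly back onto the base copy $\Gamma$ at the identity, while its lacunary neighbours on the same axis are pushed to distance comparable to the diverging gaps of $M_k$, and the copies attached to the other axes are pushed off to infinity because they sit in general position relative to the axis of $g_k$. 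Hence on any fixed ball $g_k^{n_j}\Delta g_k^{-n_j}$ eventually agrees with $\Delta$, i.e. $g_k^{n_j}\Delta g_k^{-n_j} \to \Delta$. For the upper bound on $\delta$ I would pass to the Stallings core graph of $\Delta$, which is the finite core graph of $\Gamma$ with infinitely many long, lacunarily spaced ``hairs'' attached, each hair being a long geodesic segment terminating in a copy of the core of $\Gamma$; since the critical exponent equals the exponential growth rate of reduced loops at the basepoint, and since a loop using a hair pays at least twice the hair length, the extra loops perturb this growth rate by an amount that is exponentially small in the hair lengths, summing to at most $\epsilon$ once the thresholds $R_k$ and the gaps inside each $M_k$ are calibrated.

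The main obstacle is reconciling the two competing demands: recurrence along \emph{every} axis wants abundant copies, whereas controlling $\delta$ wants them rare and deep, and conjugation adapted to one $g_k$ simultaneously displaces all the copies attached to the other $g_i$. The crux is therefore the ``general position'' claim—that conjugating by $g_k^{n_j}$ drags no foreign copy, and no mixed product of copies, into a fixed ball about the identity, so that the limit is exactly $\Delta$ and no short elements are spuriously created—together with the quantitative estimate that lacunary hairs perturb the loop-growth rate by at most $\epsilon$. Both hinge on choosing the radii $R_k$ and the gaps within each $M_k$ to grow fast enough, and verifying them carefully is where the real work lies.
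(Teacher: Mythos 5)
Your reduction to finding $\Delta\supseteq\Gamma$ with $\Delta\cap B_{R_0}=\Gamma\cap B_{R_0}$ and $\delta(\Delta)\le\delta(\Gamma)+\epsilon$ is fine, and your heuristic that long ``hairs'' perturb the loop-growth rate by an exponentially small amount is the right quantitative input (it is what Lemma \ref{lem:cRestimate} and Proposition \ref{prop:exponentestimate} make precise). But your construction of $\Delta$ has a genuine gap: attaching copies of the \emph{original} group $\Gamma$ at the positions $g_k^m$, $m\in M_k$, does not produce a boomerang subgroup, and your verification of the boomerang property contradicts itself. You argue that conjugating by $g_k^{n_j}$ carries the copy at $g_k^{n_j}$ onto the base copy while pushing \emph{every other} copy (the lacunary neighbours on the same axis and all copies on other axes) off to infinity; but then on any fixed ball the conjugates eventually agree with $\Gamma$, not with $\Delta$. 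Since $\Delta$ properly contains $\Gamma$ (it contains elements of $g_1^{m}\Gamma g_1^{-m}$ of some finite length $L$, and on $B_L$ these are eventually absent from the conjugates), what you have shown is $g_k^{n_j}\Delta g_k^{-n_j}\to\Gamma\ne\Delta$, which is precisely the failure of the boomerang condition. In Schreier-graph terms: the ball of radius $T$ around the root contains $\Core(\Gamma)$ together with the initial edges of the hairs in \emph{all} directions whose branch points lie within distance $T$ (already $T=\operatorname{diam}\Core(\Gamma)+1$ sees several of them), whereas the ball of radius $T$ around $v_{g_k^{n_j}}$ sees a copy of $\Core(\Gamma)$ with hairs only in the two directions along the $g_k$-axis. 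No other choice of conjugating sequence repairs this, because no satellite vertex of your graph has a neighbourhood replicating the full hair pattern at the root beyond a fixed radius.

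The missing idea is that each newly attached copy must replicate \emph{everything built so far}, not just $\Gamma$. This is exactly what Construction \ref{par:constr} does: at stage $n$ one sets $\Gamma_n=\Gamma_{n-1}\ast g_n^{m_n}\Gamma_{n-1}g_n^{-m_n}$, gluing a copy of the whole group $\Gamma_{n-1}$ (whose core already carries all hairs from stages $1,\dots,n-1$) at the end of a long power of $g_n$. This yields $B_{\Sch(\Gamma_{n'})}(v_1,R_{n-1})\cong B_{\Sch(\Gamma_{n'})}(v_{g_n^{m_n}},R_{n-1})$ for all $n'\ge n$, with radii $R_{n-1}\to\infty$, and since every power of every $g$ occurs infinitely often in the enumeration this gives the boomerang property via Proposition \ref{prop:boomerangschreier}. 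The price is that the growth estimate must then be applied iteratively, $\delta(\Gamma_n)<\delta(\Gamma_{n-1})+\epsilon/2^n$ by Proposition \ref{prop:exponentestimate} with $m_n$ large, and one passes to the increasing union using lower semicontinuity of $\delta$ (Proposition \ref{prop:liminf}); your single-step, all-at-once estimate would not suffice for the corrected construction, since the attached copies grow with $n$. (A smaller point: you never address the case where $\Gamma$ is cyclic or trivial, for which Proposition \ref{prop:coornaert} is unavailable and the paper first enlarges $\Gamma$ using Proposition \ref{prop:exponentestimate}(2).)
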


The proof uses the visualization of boomerang subgroups via Schreier graphs introduced in \cite[Section 2]{GlasnerLederle2025}. 

The critical exponent is tightly related to the \emph{growth rate} of a subgroup.
Consider again $F_d$, the free group of rank $d$, acting on its Cayley tree $\tree$.
We denote by $B_\tree(1,R)$ the subgraph that is the closed ball of radius $R$ centered at $1$ in $\tree$.
For $g \in F_d$ we denote by $|g|$ the usual word length.
We denote
\[
c_\Gamma(R) := |\Gamma \cap B_\tree(1,R)| = \{\gamma \in \Gamma \mid |\gamma| \leq R\}.
\]
An alternate description of the critical exponent is
\[
\delta(\Gamma) = \lim_{R \to \infty} \frac{\ln(c_\Gamma(R))}{R},
\]
where the limit exists by Fekete's lemma.
Exponentiating we get $b(\Gamma) := e^{\delta(\Gamma)}$, and $b(\Gamma)$ measures the \emph{growth rate} in the sense that $c_\Gamma(R) \sim b(\Gamma)^R$.

In a recent sequence of papers \cite{LouvarisWiseYehuda2024,LouvarisWiseYehuda2025}, Louvaris, Wise and Yehuda showed that every number in $[1,2d-1]$ can be realized as the growth rate of a subgroup of $F_d$, and the set of growth rates of finitely generated subgroups of $F_d$ is dense. This now immediately gives the following corollary.

\begin{corollary}\label{cor:cedense}
	Let $\Boom(F_d) \subset \Sub(F_d)$ denote the set of boomerang subgroups.
	Then, $\delta(\Boom(F_d))$ is dense in $[0,\ln(2d-1)]$, and $b(\Boom(F_d))$ is dense in $[1,2d-1]$.
\end{corollary}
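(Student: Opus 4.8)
The plan is to combine Theorem~\ref{thm:main} with the density result of Louvaris, Wise and Yehuda. Since $\delta(\Gamma) = \ln b(\Gamma)$ and the logarithm restricts to a homeomorphism $[1,2d-1] \to [0,\ln(2d-1)]$, it suffices to prove that $b(\Boom(F_d))$ is dense in $[1,2d-1]$; the density of $\delta(\Boom(F_d))$ in $[0,\ln(2d-1)]$ then follows immediately by applying $\ln$.

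So fix a target value $t \in [1,2d-1]$ and a tolerance $\eta > 0$; I want to produce a boomerang subgroup $\Delta$ with $|b(\Delta) - t| < \eta$. First, by the theorem of Louvaris--Wise--Yehuda, the set of growth rates of finitely generated subgroups of $F_d$ is dense in $[1,2d-1]$, so I can choose a finitely generated subgroup $\Gamma \leq F_d$ with $|b(\Gamma) - t| < \eta/2$. Next, since $b(\Gamma) < t + \eta/2$ and $e^s \to 1$ as $s \to 0$, I can fix $\epsilon > 0$ small enough that $b(\Gamma) \cdot e^\epsilon < t + \eta$.

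Now I apply Theorem~\ref{thm:main} to $\Gamma$, taking $O = \Sub(F_d)$ (no topological constraint is needed here) and the $\epsilon$ just chosen. This yields a boomerang subgroup $\Delta \in \Boom(F_d)$ with $\delta(\Gamma) \leq \delta(\Delta) \leq \delta(\Gamma) + \epsilon$, which after exponentiating reads $b(\Gamma) \leq b(\Delta) \leq b(\Gamma) \cdot e^\epsilon$. Combining these with the choice of $\Gamma$ and of $\epsilon$ gives $t - \eta < b(\Gamma) \leq b(\Delta) \leq b(\Gamma) \cdot e^\epsilon < t + \eta$, so $|b(\Delta) - t| < \eta$, as desired. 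Since $t$ and $\eta$ were arbitrary, $b(\Boom(F_d))$ is dense in $[1,2d-1]$.

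The point worth emphasizing---and the only subtlety---is that Theorem~\ref{thm:main} pushes the critical exponent only upward, from $\delta(\Gamma)$ to at most $\delta(\Gamma)+\epsilon$, so a boomerang subgroup is never guaranteed to have growth rate exactly $b(\Gamma)$ or below it. This one-sidedness is harmless: choosing $b(\Gamma)$ within $\eta/2$ of $t$ and then making $\epsilon$ small forces the entire interval $[b(\Gamma),\, b(\Gamma)e^\epsilon]$ into $(t-\eta,\, t+\eta)$. No genuine obstacle arises; all the work is contained in Theorem~\ref{thm:main} and in the Louvaris--Wise--Yehuda density theorem, and the corollary is merely their assembly. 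At the endpoint $t = 2d-1$ the upper bound $b(\Gamma)e^\epsilon$ may formally exceed $2d-1$, but this causes no problem, since $b(\Delta) \leq 2d-1$ holds automatically for every subgroup of $F_d$ and the estimate $b(\Delta) > t-\eta$ alone already places $b(\Delta)$ within $\eta$ of $t$.
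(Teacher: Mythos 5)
Your proof is correct and follows exactly the route the paper intends: the paper simply states that the corollary follows ``immediately'' from Theorem \ref{thm:main} together with the Louvaris--Wise--Yehuda density result, and your argument is the straightforward $\epsilon$-management needed to make that assembly explicit. Nothing further is required.
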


In some sense this corollary means that there are boomerang subgroups that cannot be ``typical instances" of invariant random subgroups, or a given stationary random subgroup.

A subgroup $\Delta \leq F_d$ is \emph{confined} if there is no sequence $(g_n)$ in $F_d$ with $\lim_{n\to \infty} g_n \Delta g_n^{-1} = \{1\}$. 

\begin{corollary}\label{cor:confined}
	The set of critical exponents of non-confined boomerang subgroups is dense in $[0,\ln(2d-1)]$.
\end{corollary}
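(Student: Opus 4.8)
The plan is to combine the density statement of Corollary~\ref{cor:cedense} with a mechanism for forcing non-confinement into the construction underlying Theorem~\ref{thm:main}. Recall that $\Delta \le F_d$ is confined precisely when $\{1\}$ is \emph{not} in the closure of the conjugation-orbit of $\Delta$ in $\Sub(F_d)$; equivalently, $\Delta$ is non-confined iff there is a sequence $(g_n)$ with $g_n \Delta g_n^{-1} \to \{1\}$. Since any target critical exponent in $[0,\ln(2d-1)]$ is already realizable in the limit by Corollary~\ref{cor:cedense}, the whole content of this corollary is to produce boomerang subgroups that are simultaneously non-confined and have critical exponent near a prescribed value.

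First I would fix a target $s \in [0,\ln(2d-1)]$ and, using the Louvaris--Wise--Yehuda density result, choose a finitely generated $\Gamma \le F_d$ with $\delta(\Gamma)$ within $\epsilon/2$ of $s$. Crucially, I would arrange that $\Gamma$ is of \emph{infinite index} in $F_d$, which is automatic once $\delta(\Gamma) < \ln(2d-1)$ and can be forced in the boundary case $s=\ln(2d-1)$ by passing to a proper free factor or a suitable finite-index-avoiding subgroup without moving $\delta$ by more than $\epsilon/2$. Then I would run the construction of Theorem~\ref{thm:main} to obtain a boomerang subgroup $\Delta$ with $\Delta \in O$ and $\delta(\Gamma) \le \delta(\Delta) \le \delta(\Gamma)+\epsilon/2$, so that $|\delta(\Delta) - s|$ is controlled.

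The key additional step is to verify non-confinement of $\Delta$ directly from its Schreier-graph description. Because $\Gamma$ has infinite index, its Schreier graph $\Sch(\Gamma)$ contains vertices arbitrarily far from its core (equivalently, arbitrarily long ``rays'' leaving any finite piece), and the graphs built in the proof of Theorem~\ref{thm:main} inherit this feature: they contain vertices $v_n$ whose distance to the basepoint in the Schreier graph tends to infinity along a geodesic of the tree. Choosing group elements $g_n$ that push the basepoint out to $v_n$, the conjugates $g_n \Delta g_n^{-1}$ are the subgroups read off at the re-based graphs, and along such a sequence the generators of any \emph{fixed} element of $F_d$ eventually fail to close up into a loop at the new basepoint, giving $g_n \Delta g_n^{-1} \to \{1\}$. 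I would make this precise by noting that for each fixed $w \in F_d \setminus \{1\}$, once the basepoint is deep enough inside a tree-like (loop-free) region of the graph the word $w$ labels a non-closed path, so $w \notin g_n \Delta g_n^{-1}$ for all large $n$; a diagonal choice of $(v_n)$ then handles all $w$ simultaneously.

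The main obstacle I anticipate is reconciling two tensions in the construction. On one hand, the proof of Theorem~\ref{thm:main} engineers the Schreier graph to have controlled growth and to be boomerang-recurrent, which typically requires a densely recurring pattern of loops throughout the graph; on the other hand, non-confinement demands genuinely loop-free regions escaping to infinity. The resolution I would pursue is to keep the recurrent loop-pattern along the ``backbone'' that produces the right growth and recurrence, while attaching sufficiently long \emph{non-backtracking tree branches} off to the side whose basepoints we can translate to; these branches are invisible to the growth computation up to the $\epsilon$ slack yet provide the escaping loop-free rays needed for non-confinement. Verifying that inserting such branches neither destroys the boomerang property nor inflates $\delta(\Delta)$ beyond $\delta(\Gamma)+\epsilon$ is where the real care lies, but both are local-to-global checks on the Schreier graph already implicit in the machinery of \cite[Section 2]{GlasnerLederle2025}.
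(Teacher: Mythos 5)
Your overall strategy --- get density from Corollary~\ref{cor:cedense} and then certify non-confinement directly on the Schreier graph by exhibiting basepoints deep inside loop-free regions --- is the right one and matches the paper. But the mechanism you propose for producing those loop-free regions fails. You claim that $\Sch(\Delta)$ ``inherits'' from $\Sch(\Gamma)$ arbitrarily long rays leaving every finite piece, and, anticipating trouble, you propose to graft long tree branches onto the graph. Both are incompatible with $\Delta$ being a nontrivial boomerang subgroup: as noted right after Lemma~\ref{lem:powersincore}, $\Core(\Delta)=\Sch(\Delta)$ for every nontrivial boomerang subgroup, i.e.\ the Schreier graph is $2d$-regular with no hanging trees, precisely because a vertex $v_g$ outside the core forces $g^n\Delta g^{-n}\to\{1\}$ and kills recurrence for that $g$. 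The rays present in each finite-stage graph $\Sch(\Gamma_n)$ are exactly what gets filled in at later stages of Construction~\ref{par:constr}, so they are not available in the limit; and any genuinely attached loop-free branch would destroy the boomerang property outright, not merely require ``care''.

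The loop-free regions you need are already in the construction, just not where you look for them: they are the join segments of the connectors $g_n^{m_n}$. In the proof of Theorem~\ref{thm:main} the $m_n$ are chosen so large that the join lengths $J_n$ tend to infinity; every simple cycle of $\Sch(\Delta)$ lies inside one of the glued copies of some $\Core(\Gamma_{k-1})$, since each join-segment edge is a bridge, and each such copy sits at the far end of a join segment of length $J_k$ from its single attachment vertex. Hence, if $v_n$ is the midpoint of the $n$-th join segment and $r_n=\min(J_n/2,\inf_{k>n}J_k)-1\to\infty$, the ball $B_{\Sch(\Delta)}(v_n,r_n)$ contains no cycle and is a tree, so the corresponding conjugates of $\Delta$ converge to $\{1\}$ and $\Delta$ is non-confined. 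This is the content of the paper's (very terse) ``clear from the above proof''; no modification of the construction is needed. Your reduction to infinite index and your use of Louvaris--Wise--Yehuda to approach the endpoint $\ln(2d-1)$ from below are fine, though the ``proper free factor'' step gives no quantitative control on $\delta$ and is unnecessary: one simply picks finitely generated $\Gamma$ with $\delta(\Gamma)<\ln(2d-1)$ arbitrarily close to it.
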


Fraczyk \cite[Corollary 3.2]{Fraczyk2019} showed that subgroups of $F_d$ with critical exponent $\leq \ln(2d-1)/2$ cannot be confined.
Lower semicontinuity of the critical exponent (see Proposition \ref{prop:liminf}), together with Theorem \ref{thm:GekhtmanLevit} now immediately implies the following.

\begin{corollary}\label{cor:irs supported}
	There exist boomerang subgroups $\Delta < F_d$ such that the only invariant random subgroup supported on the closure of the conjugacy class of $\Delta$ is the Dirac measure at the trivial group.
\end{corollary}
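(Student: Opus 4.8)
The plan is to construct a boomerang subgroup whose critical exponent is strictly below $\ln(2d-1)/2$.

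Let me think about what Corollary \ref{cor:irs supported} is claiming and how to prove it.The plan is to realize a boomerang subgroup $\Delta$ whose critical exponent sits strictly below the Gekhtman--Levit threshold $\tfrac12\dim_H(\partial\tree)=\tfrac12\ln(2d-1)$, and to show that this single inequality, propagated over the whole conjugacy class, leaves no room for any invariant random subgroup other than the Dirac mass at $\{1\}$.

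First I would invoke Corollary \ref{cor:cedense} to fix a boomerang subgroup $\Delta\in\Boom(F_d)$ with $0<\delta(\Delta)<\tfrac12\ln(2d-1)$; the lower bound $\delta(\Delta)>0$ only serves to make $\Delta$ infinite, so that the resulting example is nondegenerate. Since $\delta(\Delta)\le\tfrac12\ln(2d-1)$, Fraczyk's result guarantees that $\Delta$ is not confined, so the trivial group lies in $C:=\overline{\{g\Delta g^{-1}\mid g\in F_d\}}$; in particular $\delta_{\{1\}}$ is a genuine invariant random subgroup supported on the closed, conjugation-invariant set $C$. The next step is a uniform upper bound: conjugation by elements of $\Iso(\tree)$ preserves the critical exponent, so $\delta(g\Delta g^{-1})=\delta(\Delta)$ for all $g$, and lower semicontinuity (Proposition \ref{prop:liminf}) then yields $\delta(H)\le\delta(\Delta)<\tfrac12\ln(2d-1)$ for every $H\in C$, whether $H$ is a conjugate of $\Delta$ or a Chabauty limit of such.

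I would then take an arbitrary invariant random subgroup $\mu$ supported on $C$ and run a dichotomy. Because $F_d$ is torsion-free, its only finite subgroup is $\{1\}$, so the nontrivial members of $\Sub(F_d)$ are exactly the infinite ones, forming a conjugation-invariant Borel set. If $\mu\neq\delta_{\{1\}}$, then $\mu$ assigns positive mass to the infinite subgroups; conditioning $\mu$ on this set produces an invariant random subgroup $\mu'$ with $\mu'$-almost every subgroup infinite (and automatically discrete, as $F_d<\Aut(\tree)$ is discrete) and still supported on $C$. Applying Theorem \ref{thm:GekhtmanLevit}(1) to $\mu'$ forces $\delta(H)>\tfrac12\ln(2d-1)$ for $\mu'$-almost every $H$, contradicting the uniform bound on $C$. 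Hence $\mu$ charges only $\{1\}$, i.e.\ $\mu=\delta_{\{1\}}$.

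The step I expect to be the main obstacle is the invocation of Theorem \ref{thm:GekhtmanLevit}, which is stated for invariant random subgroups of the ambient group $G=\Aut(\tree)$, whereas $\mu'$ lives on the lattice $F_d$. Bridging this requires inducing $\mu'$ along the inclusion $F_d<\Aut(\tree)$---for which $F_d$ is a uniform lattice, $G$ is closed, non-elementary and acts cocompactly, and $\dim_H(\partial\tree)=\ln(2d-1)$---and checking that this induction preserves infiniteness and discreteness of almost every subgroup as well as the distribution of critical exponents (the latter because conjugation by $\Iso(\tree)$ is exponent-preserving). Once this correspondence is in place, the contradiction above is immediate and the corollary follows.
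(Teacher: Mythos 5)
Your proposal is correct and follows essentially the same route as the paper, which derives the corollary in one line from the existence of a boomerang subgroup with critical exponent below $\tfrac{1}{2}\ln(2d-1)$, lower semicontinuity (Proposition \ref{prop:liminf}) applied to Chabauty limits of conjugates, and Theorem \ref{thm:GekhtmanLevit}. The ``main obstacle'' you flag at the end is actually a non-issue: Theorem \ref{thm:GekhtmanLevit} allows $G$ to be any closed, non-elementary, cocompact subgroup of $\Iso(X)$ admitting a uniform lattice, and $F_d$ itself (discrete, acting cocompactly on its Cayley tree, being its own uniform lattice) satisfies these hypotheses, so no induction to $\Aut(\tree)$ is needed.
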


\begin{remark}
Using results from \cite{GekhtmanLevit2023} about critical exponents one can derive a similar statement involving stationary random subgroups.
But still \cite[Theorem 1.8]{GlasnerLederle2025} implies that nontrivial boomerang subgroups of the free group are geometrically dense.
\end{remark}

Corollary \ref{cor:irs supported} is another stark contrast to lattices of higher $\mathbb{Q}$-rank in simple Lie groups, like $\SL_n(\mathbb{Z})$ with $n \geq 3$.
The main theorem of \cite{GlasnerLederle2025} shows that
every non-trivial boomerang subgroup there is finite and central or has finite index, in particular has only finitely many conjugates and is in particular confined.

An underlying philosophical question that motivated this work is: ``How are boomerang subgroups more general than typical instances of invariant random subgroups?"
In this work, we look at the question from the geometric point of view of the critical exponent.
In joint work with Yair Glasner and Tobias Hartnick \cite{GlasnerHartnickLederle2025}, we consider a completely different angle: non-singular dynamics and orbit equivalence. We start from the observation that whenever $\mu$ is a Borel probability measure on $\Sub(\Gamma)$ such that each $\gamma \in \Gamma$ acts on $(\Sub(\Gamma),\mu)$ as a conservative transformation, then $\mu$-almost every subgroup is a boomerang subgroup.
We then construct uncountably many such measures on $\Sub(F_d)$ that are singular with respect to every invariant random subgroup.
Whether there is a connection to this work is unclear, see Question \ref{qu:random}.

\paragraph*{Acknowledgements.} I thank Tianyi Zheng for asking me whether boomerang subgroups can have small critical exponents.
I thank Ilya Gekhtman for helpful clarifications and Yair Glasner and Tobias Hartnick for helpful discussions and feedback.
Part of this work was done on the LG\&TBQ2 conference in Montreal, my travel there was partially funded by Thomas Koberda's Shannon Fellowship.

\section{Schreier graphs and their cores}

\subsection{Schreier graphs}

The subgroups of $F_d = \langle a_1,\dots,a_d \rangle$ are in 1-1 correspondence with (isomorphism classes of) directed, labeled, rooted graphs, where each vertex has exactly one ingoing and one outgoing edge labeled $a_i$ for each $i = 1,\dots,d$. An edge can be ingoing and outgoing for the same vertex, it is then a loop.
 We call such a graph a \emph{Schreier graph}.
We can write any $g \in F_d$ uniquely as reduced word $g = b_1 \dots b_k \in F_d$, where $b_j \in \{a_1,\dots,a_d,a_1^{-1},\dots,a_d^{-1}\}$.
We can interpret $g$ as a path\footnote{Our paths and cycles are allowed to visit vertices and edges multiple times, and go against the direction of an edge, but have no backtracking unless explicitly allowed to.}
 $\omega_g$ in the Schreier graph starting at the root and walking along the edges with label $b_1,\dots,b_k$, where ``walking along $a_i^{-1}$" means ``walking along the directed edge with label $a_i$ into the opposite direction". In addition $g$ determines a vertex $v_g$ that is the endpoint of that path.
The group $F_d$ acts on the set of Schreier graphs by changing the root from $v_1$ to $v_g$.

Given a Schreier graph, the corresponding subgroup of $F_d$ is
$\{\gamma \in F_d \mid v_\gamma = v_1\}$, or all $\gamma \in F_d$ leaving the root fixed.
Conversely, if $\tree$ is the (directed, labeled, rooted) Cayley tree of $F_d$ and $\Gamma \leq F_d$, then the \emph{Schreier graph} of $\Gamma$ is the quotient $\Sch(\Gamma) = \Gamma {\setminus} \tree$.
This correspondence between Schreier graphs and subgroups is equivariant, with respect to the action of $F_d$ on Schreier graphs (via root change) and on subgroups (via conjugation).

The topology on $\Sub(F_d)$ can be formulated via the Schreier graphs as follows.
For a graph $\frakg$ and a vertex $v$, we denote by $B_\frakg(v,R)$ the subgraph induced by all vertices of distance at most $R$ from $v$.
Given $\Gamma \leq \Sub(F_d)$, a neighborhood basis of $\Gamma$ is 
$\{\Delta \in \Sub(F_d) \mid B_{\Sch(\Gamma)}(v_1,R) \cong B_{\Sch(\Delta)}(v_1,R) \}$,
where $R$ runs over all natural numbers and $\cong$ denotes isomorphism as directed, rooted, labeled, directed graphs.
We hence can determine boomerang subgroups by their Schreier graphs, as described in the following proposition.

\begin{proposition}[{\cite[Section 2]{GlasnerLederle2025}}]\label{prop:boomerangschreier}
	A subgroup $\Delta \leq F_d$ is a boomerang subgroup if and only if for every $g \in F_d$ and every $R > 0$ there exists an $n > 0$ such that the balls $B_{\Sch(\Gamma)}(v_1,R)$ and $B_{\Sch(\Gamma)}(v_{g^n},R)$ are isomorphic as labeled, rooted graphs with root $v_1$ and $v_{g^n}$, respectively.
\end{proposition}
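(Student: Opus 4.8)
The plan is to unwind both sides of the equivalence into statements about rooted balls in the single Schreier graph $\Sch(\Delta)$, and then to recognize the resulting statement as the elementary characterization of recurrent points of a homeomorphism of a compact metric space.

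First I would make the equivariance between re-rooting and conjugation fully explicit at the level of balls. Left multiplication by $g^n$ is a label-preserving automorphism of $\tree$, and a direct computation on cosets (it sends $\Delta x \mapsto (g^n\Delta g^{-n})g^n x$) shows that it descends to a label-preserving isomorphism $\Sch(\Delta)\xrightarrow{\sim}\Sch(g^n\Delta g^{-n})$ carrying the vertex $v_{g^{-n}}$ of $\Sch(\Delta)$ to the root $v_1$ of $\Sch(g^n\Delta g^{-n})$. Hence
\[
B_{\Sch(g^n\Delta g^{-n})}(v_1,R)\ \cong\ B_{\Sch(\Delta)}(v_{g^{-n}},R)
\]
as rooted, labeled graphs. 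Combined with the neighborhood-basis description of the Chabauty topology recalled above, this says exactly that $g^n\Delta g^{-n}$ lies in the basic $R$-neighborhood $U_R$ of $\Delta$ if and only if $B_{\Sch(\Delta)}(v_{g^{-n}},R)$ is rooted-isomorphic to $B_{\Sch(\Delta)}(v_1,R)$. Since the quantifier ``for every $g\in F_d$'' ranges over $g$ and $g^{-1}$ simultaneously, the sign of the exponent is immaterial, and I may freely pass between $v_{g^n}$ and $v_{g^{-n}}$; in particular the proposition's condition for $g$ matches the one just derived applied to $g^{-1}$.

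With this dictionary, the definition of a boomerang subgroup becomes: for every $g$, the point $\Delta$ is recurrent under the conjugation homeomorphism $T_g\colon\Delta'\mapsto g\Delta' g^{-1}$ of the compact metrizable space $\Sub(F_d)$, i.e. $\Delta\in\omega(\Delta)$ for $T_g$. Read through the dictionary, the proposition's hypothesis is the weaker-looking assertion that for every $g$ and every basic neighborhood $U_R$ there exists at least one $n>0$ with $T_g^{\,n}\Delta\in U_R$ (call this property (P)). The content of the proposition is therefore precisely that (P) characterizes recurrence, and the reverse implication is immediate, since a return sequence $n_k\to\infty$ supplies, for each $R$, infinitely many qualifying $n$.

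The only point requiring care, and the potential obstacle, is the nontrivial implication (P) $\Rightarrow$ recurrence, i.e. the gap between returning \emph{once} to each neighborhood and returning \emph{arbitrarily late}. I would dispatch it using a nested neighborhood basis $U_1\supseteq U_2\supseteq\cdots$ with $\bigcap_R U_R=\{\Delta\}$, which exists because $\Sub(F_d)$ is Hausdorff and the balls $B_{\Sch(\Delta)}(v_1,R)$ form a countable neighborhood basis that determines $\Delta$. Fix $g$ and, for each $R$, choose $n_R>0$ with $T_g^{\,n_R}\Delta\in U_R$. If the $n_R$ are unbounded, a subsequence tending to infinity gives $T_g^{\,n_{R_k}}\Delta\to\Delta$ directly, yielding recurrence. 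If the $n_R$ are bounded, then by pigeonhole some fixed $n>0$ has $T_g^{\,n}\Delta\in U_R$ for infinitely many $R$, hence, by nesting, for all $R$, forcing $T_g^{\,n}\Delta\in\bigcap_R U_R=\{\Delta\}$; thus $\Delta$ is a periodic point of $T_g$ and so recurrent via $n_k=kn$. Either way $\Delta$ is recurrent under $T_g$, and as $g$ was arbitrary this establishes the boomerang property, completing the equivalence.
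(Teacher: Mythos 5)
Your proof is correct. The paper itself gives no argument for this proposition --- it is quoted from \cite{GlasnerLederle2025} with a bare citation --- so there is nothing to compare against line by line; but your write-up supplies exactly the two ingredients a proof needs: the equivariance dictionary identifying $B_{\Sch(g^n\Delta g^{-n})}(v_1,R)$ with $B_{\Sch(\Delta)}(v_{g^{-n}},R)$ (with the sign of the exponent correctly absorbed by quantifying over $g$ and $g^{-1}$), and the resolution of the only genuine subtlety, namely that the stated condition asks for a single return $n$ per $(g,R)$ while the boomerang definition demands return times tending to infinity. Your dichotomy --- unbounded return times give recurrence directly, bounded return times force $g^n\Delta g^{-n}=\Delta$ via the nested basis $\bigcap_R U_R=\{\Delta\}$ and hence periodicity --- closes that gap cleanly, so the argument is complete.
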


\subsection{Core graphs}

Given a Schreier graph, it is only the part of the graph that contains all cycles that determines the corresponding subgroup. This part is called the \emph{core}.
For a comprehensive treatment of the core we recommend \cite{KapovichMyasnikov2002}.
%

\begin{definition}
	Let $\Gamma \leq F_d$ be a subgroup and $\Sch(\Gamma)$ its Schreier graph. 
	The \emph{core graph} of $\Gamma$ is the unique minimal connected (directed, labeled, rooted) subgraph $\Core(\Gamma)$ of $\Sch(\Gamma)$ containing all cycles based at the root.
\end{definition}

In particular, all vertices have valency at least two, except possibly the root.
From $\Core(\Gamma)$ we can recover $\Sch(\Gamma)$ by hanging infinite trees at all vertices of valency less than $2d$.
Thus also (isomorphism classes of) core graphs are in 1-1 correspondence with subgroups of $F_d$.
Given a core graph $\frakg$, denote $\Gr(\frakg) \leq F_d$ the corresponding subgroup, so $\Core(\Gr(\frakg))=\frakg$ and $\Gr(\Core(\Gamma))=\Gamma$.

\begin{remark}\label{rem:finite core}
	It is not difficult to see that $\Gamma$ is finitely generated if and only if $\Core(\Gamma)$ is finite. 
	More precisely, let $\Gamma = \langle \gamma_1,\dots,\gamma_n \rangle$, then
	$\Core(\Gamma) = \bigcup_{i=1}^n \omega_{\gamma_i} \subset \Sch(\Gamma)$.
	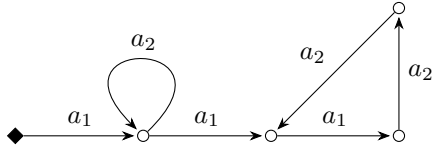
\begin{figure}
		\centering
\begin{tikzpicture}[shorten >=1pt,node distance=1.7cm,auto]
		\node[draw,diamond,inner sep=1.5pt,fill=black]        (A)              {};
		\node[draw,circle,inner sep=1.5pt]        (B) [right of=A] {};
		\node[draw,circle,inner sep=1.5pt]        (C) [right of=B] {};
		\node[draw,circle,inner sep=1.5pt]        (D) [right of=C] {};
		\node[draw,circle,inner sep=1.5pt]        (E) [above of=D] {};
		\draw[->, >=Stealth] (A) edge node {$a_1$} (B)
			  (B) edge [out=44,in=135, looseness=40] node [swap] {$a_2$} (B)
			  (B) edge node {$a_1$} (C)
			  (C) edge node {$a_1$} (D)
			  (D) edge node [swap] {$a_2$} (E)
			  (E) edge node [swap] {$a_2$} (C);
\end{tikzpicture}
		\caption{The graph $\Core(\langle a_1 a_2 a_1^{-1}, a_1^3 a_2^2 a_1^{-2} \rangle)$.}
		\label{fig:core}
	\end{figure}
	Moreover, in this case, $\Gamma$ is a cocompact lattice in the automorphism group of the universal cover of $\Core(\Gamma)$, which is the unique minimal subtree of the Cayley tree $\tree$ of $F_d$ containing the verices $\Gamma \subset \tree$. This minimal subtree has more than two ends if and only if $\Gamma$ is not cyclic.
\end{remark}

\begin{lemma}\label{lem:powersincore}
	Let $\Gamma \leq F_d$ be finitely generated and $g \in F_d$. The following are equivalent.
	\begin{enumerate}
		\item There exists $n \geq 1$ with $g^n \in \Gamma$.
		\item The set of vertices $\{v_{g^n} \mid n \in \mathbb{Z}\} \subset \Sch(\Gamma)$ is finite.
	\end{enumerate}
\end{lemma}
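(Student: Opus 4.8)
The plan is to translate everything into the language of cosets and then argue by a pigeonhole/orbit--stabilizer observation. Recall that the vertices of $\Sch(\Gamma) = \Gamma \backslash \tree$ are the right cosets $\Gamma h$, $h \in F_d$, with root $\Gamma$, and that the endpoint of the path $\omega_h$ read from the root is exactly the coset $\Gamma h$; in particular $v_{g^n} = \Gamma g^n$. Under this identification, condition (2) says precisely that the orbit of the base coset $\Gamma$ under the right-multiplication action of the cyclic group $\langle g \rangle$ on $\Gamma \backslash F_d$ is finite. So the whole lemma reduces to the elementary fact that this orbit is finite if and only if some nontrivial power of $g$ stabilizes $\Gamma$.

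For $(1) \Rightarrow (2)$ I would argue by periodicity: if $g^n \in \Gamma$ with $n \geq 1$, then $\Gamma g^n = \Gamma$, hence $\Gamma g^{kn+r} = \Gamma g^r$ for all $k,r \in \mathbb{Z}$. Thus the coset $\Gamma g^m$ depends only on $m$ modulo $n$, and the set $\{v_{g^m} \mid m \in \mathbb{Z}\}$ has at most $n$ elements, which is (2).

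For $(2) \Rightarrow (1)$ I would use pigeonhole: if $\{\Gamma g^m \mid m \in \mathbb{Z}\}$ is finite, then the map $\mathbb{Z} \to \Gamma \backslash F_d$, $m \mapsto \Gamma g^m$, cannot be injective, so $\Gamma g^m = \Gamma g^{m'}$ for some $m > m'$. Multiplying on the right by $g^{-m'}$ gives $\Gamma g^{m-m'} = \Gamma$, i.e.\ $g^{m-m'} \in \Gamma$ with $m - m' \geq 1$, which is (1). (Equivalently, this is just orbit--stabilizer applied to $\langle g \rangle$ acting on $\Gamma \backslash F_d$, the finite orbit forcing a nontrivial stabilizer.)

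I do not expect a genuine obstacle here: the argument is a direct coset computation. The only point requiring care is the bookkeeping at the start, namely correctly identifying $v_{g^n}$ with the right coset $\Gamma g^n$ and respecting the side on which $\langle g \rangle$ acts; once that is set up, both implications are immediate. It is worth noting that finite generation of $\Gamma$ is never used in this equivalence---it is the standing hypothesis of the section only because elsewhere it allows one to replace $\Sch(\Gamma)$ by its finite core graph---so the statement in fact holds for arbitrary $\Gamma \leq F_d$.
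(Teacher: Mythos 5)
Your proof is correct. The paper actually gives no proof of this lemma at all (it is stated as an unproved, elementary fact), so there is nothing to compare against; your coset translation --- identifying $v_{g^n}$ with the right coset $\Gamma g^n$, using periodicity for $(1)\Rightarrow(2)$ and pigeonhole for $(2)\Rightarrow(1)$ --- is the standard argument the author evidently had in mind, and your observation that finite generation of $\Gamma$ is not needed for the equivalence is also accurate.
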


Note that $\Core(\Delta) = \Sch(\Delta)$ for every non-trivial boomerang subgroup $\Delta \leq F_d$. In particular the core of a boomerang subgroup is $2d$-regular. 
This follows from \cite[Theorem 1.8]{GlasnerLederle2025}, but can also be seen directly.
Because assume otherwise; so there exists a vertex $v_g$ that is in the Schreier graph but not in its core. Upon making $g$ longer if necessary we can assume that $g$ is cyclically reduced and then $v_{g^n} \notin \Core(\Delta)$ for any $n > 0$. In particular, for every $n > 0$, the large ball $B_{\Sch(\Delta)}(v_{g^n},|g^{n-1}|)$ is a tree, and therefore $\Delta$ cannot satisfy Proposition \ref{prop:boomerangschreier}. In fact, $\lim_{n \to \infty} g^n \Delta g^{-n} = \{1\}$.

In what follows, we introduce a notation of gluing together two cores along a word in $F_d$.

\begin{definition}
	Let $\frakg_1$ and $\frakg_2$ be finite core graphs.
	Let $g \in F_d$ and 
	write it as reduced word $g = b_1 \dots b_k$.
	Let $k_1 := \max\{1 \leq i \leq k \mid v_{b_1 \dots b_i} \in \frakg_1\}$
	and $k_2 := \max\{1 \leq i \leq k \mid v_{b_k^{-1}} \dots v_{b_{k-i}^{-1}} \in \frakg_2\}$.
	We call $g$ an \emph{admissible connector} for $\frakg_1$ with $\frakg_2$ if
	 $k_1 + k_2 < k$.
	 Then, we call the subword
	 $b_1 \dots b_{k_1}$ the \emph{initial segment of $g$},
	 the word $b_k^{-1} \dots b_{k-k_2}^{-1}$ the \emph{terminal segment of $g$},
	 and $b_{k_1 + 1} \dots b_{k - k_2 - 1}$ the \emph{join segment}.
\end{definition}

\begin{definition}\label{def:glueing}
	Let $\frakg_1$ and $\frakg_2$ be finite core graphs.
	Let $g = b_1 \dots b_k \in F_d$ be an admissible connector.
	Then, the core graph
	\[
	\frakg_1 \sqcup_g \frakg_2
	\]
	is constructed as follows: 
	We start from the disjoint union $\frakg_1 \sqcup \frakg_2$,
	and we take a path of length $k - (k_1+k_2)$ labeled by the join segment. We identify the starting vertex of this path with the vertex corresponding to the initial segment $v_{b_1 \dots b_{k_1}} \in \frakg_1$, and its ending vertex with the vertex corresponding to the terminal segment $v_{b_k^{-1}} \dots v_{b_{k-k_2}^{-1}} \in \frakg_2$.
	We declare the root of the resulting graph to be the root of $\frakg_1$, and then we remove leafs until we are left with a core graph.
	\begin{figure}
		\centering
		\includegraphics[scale=0.45]{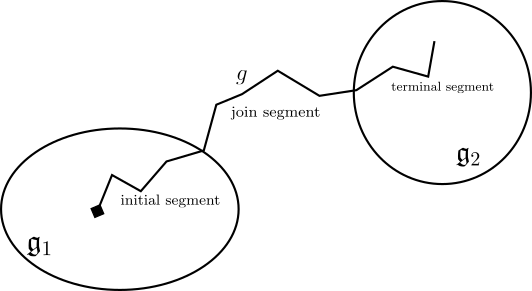}
		\caption{Depiction of $\frakg_1 \sqcup_g \frakg_2$}
		\label{fig:cup}
	\end{figure}
\end{definition}


\begin{lemma}\label{lem:glueing}
	In the situation of Definition \ref{def:glueing}, let $\Gamma_i = \Gr(\frakg_i) \leq F_d$, for $i=1,2$.
	Then, $\langle \Gamma_1, g \Gamma_2 g^{-1} \rangle= \Gamma_1
	\ast g \Gamma_2 g^{-1}$ and
	$$\frakg_1 \sqcup_g \frakg_2 = \Core(\langle \Gamma_1, g \Gamma_2 g^{-1} \rangle).$$	
\end{lemma}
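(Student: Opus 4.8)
The plan is to realize $H := \langle \Gamma_1, g\Gamma_2 g^{-1}\rangle$ as the image subgroup of an explicit labeled graph and to compute its core by Stallings folding. First I would build an auxiliary graph $\Delta_0$ from the disjoint union $\frakg_1 \sqcup \frakg_2$ by attaching a subdivided arc labeled by the reduced word $g = b_1\cdots b_k$ whose initial vertex is the root $r_1$ of $\frakg_1$ and whose terminal vertex is the root $r_2$ of $\frakg_2$, declaring $r_1$ the root. Reading loops based at $r_1$ shows directly that the labeling map from $\Delta_0$ to the rose with $d$ petals has image subgroup exactly $H$: loops staying inside $\frakg_1$ contribute $\Gamma_1$, while a loop that runs out along the arc, traverses a loop of $\frakg_2$, and returns contributes $g\Gamma_2 g^{-1}$. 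Topologically $\Delta_0$ is $\frakg_1$ and $\frakg_2$ joined by a single arc, so by van Kampen $\pi_1(\Delta_0, r_1) \cong \pi_1(\frakg_1, r_1) \ast \pi_1(\frakg_2, r_2)$, a free product whose second factor is conjugated to $r_1$ along the arc.

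Next I would fold $\Delta_0$ into its reduced (immersed) form. The definitions of $k_1$ and $k_2$ say precisely that the maximal prefix of $g$ tracing an existing directed path in $\frakg_1$ from $r_1$ has length $k_1$, and symmetrically for $g^{-1}$, $\frakg_2$ and $k_2$. Hence the only available folds identify the initial segment $b_1\cdots b_{k_1}$ of the arc with the corresponding path in $\frakg_1$, and the terminal segment with the corresponding path in $\frakg_2$, leaving a surviving arc — the join path — of length $k-(k_1+k_2)$, which is positive exactly because $g$ is an admissible connector. Crucially, no further fold is possible: at the attachment vertex $v_{b_1\cdots b_{k_1}}$ the core $\frakg_1$ has no outgoing $b_{k_1+1}$-edge, since $v_{b_1\cdots b_{k_1+1}}\notin \frakg_1$ forces that edge to lie outside the core; symmetrically at the $\frakg_2$-end; and the interior vertices of the join path are new and unshared. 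Thus folding terminates at $\frakg_1 \sqcup_g \frakg_2$ (after discarding the leaves removed by the pruning step in Definition \ref{def:glueing}). Moreover each fold merges a dangling edge onto an already-present edge without identifying two edges that share their far endpoint, so each fold is a homotopy equivalence and $\pi_1$ is unchanged.

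Finally I would combine the two computations. Since $\frakg_1 \sqcup_g \frakg_2$ is folded its labeling is an immersion, so $\pi_1(\frakg_1 \sqcup_g \frakg_2, r_1)$ injects into $F_d$ with image $H$, and the folded-and-pruned graph is by the standard correspondence exactly $\Core(H)$; this gives the second asserted equality $\Core(H) = \frakg_1 \sqcup_g \frakg_2$. For the first equality I transport the free-product decomposition from $\Delta_0$ through the homotopy equivalence: the $\frakg_1$-factor maps onto $\Gamma_1$, and reading the connecting path from $r_1$ through the initial segment, the join path, and the folded terminal segment spells exactly $g$, so the $\frakg_2$-factor maps onto $g\Gamma_2 g^{-1}$; hence $H = \Gamma_1 \ast g\Gamma_2 g^{-1}$ internally in $F_d$.

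The main obstacle is the folding bookkeeping in the middle step: one must argue rigorously that folding halts exactly at the two ends of $g$ and produces no unexpected identifications or new cycles, and handle the degenerate cases ($k_1 = 0$, a length-one join path, or a valence-one root) in which the attachment vertices or the pruning step behave specially. Pinning down that admissibility is precisely the condition preventing the two ends from overlapping — so that a genuine bridge survives the folding, which is what forces freeness — is the technical heart of the argument.
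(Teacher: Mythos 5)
Your folding argument is correct and is essentially the proof the paper relies on: the paper itself gives no argument here but defers to Kapovich--Myasnikov, Proposition 7.9, whose proof is exactly this Stallings-folding computation (attach an arc labeled $g$ between the roots, fold the maximal prefix and suffix into the two cores, observe that admissibility leaves a genuine bridge so every fold is a homotopy equivalence and the result is an immersion). You correctly isolate the key point — that $k_1$ and $k_2$ mark exactly where folding must stop because the relevant edges are absent from the cores — so your proposal matches the intended proof rather than offering a different route.
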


\begin{proof}
	See \cite[Proposition 7.9]{KapovichMyasnikov2002} and its proof.
\end{proof}

Using this we can construct boomerang subgroups containing a given finitely generated group.

\begin{construction}\label{par:constr}
Let  $\Gamma \leq F_d$ be finitely generated.
If $\Gamma$ has finite index or is the trivial subgroup, there is nothing to do.
Otherwise, enumerate $F_d = \{g_1,g_2,\dots\}$.
Set $\Gamma_0 := \Gamma$.
Assume $\Gamma_{n-1}$ has been constructed for $n \geq 1$; it is finitely generated hence $\Core(\Gamma_{n-1})$ is finite.
If $g_n$ satisfies the equivalent conditions of Lemma \ref{lem:powersincore},
we can set $\Gamma_n := \Gamma_{n-1}$.
Otherwise, let $R_{n-1} = \min\{R \mid \Core(\Gamma_{n-1}) \subset B_{\Sch(\Gamma_{n-1})}(v_1,R) \}$ be the radius of $\Core(\Gamma_{n-1})$ with center $v_1$.
There exists $M_n > 0$ such that, for all $m_n \geq M_n$,
\begin{itemize}
	\item the requirements of Definition \ref{def:glueing} are satisfied with $\frakg_1 = \frakg_2 = \Core(\Gamma_{n-1})$ and $g = g_n^{m_n}$, and
	\item $|g_n^{m_n}| > 2R_{n-1}$.
\end{itemize}
So choose any $m_n \geq M_n$.
Set
$$\Gamma_n := \langle \Gamma_{n-1}, g_n^{m_n} \Gamma_{n-1} g_n^{-m_n} \rangle = \Gamma_{n-1} \ast g_n^{m_n} \Gamma_{n-1} g_n^{-m_n}.$$
By Lemma \ref{lem:glueing} we know that $\Core(\Gamma_n) = \Core(\Gamma_{n-1}) \sqcup_{g_n^{m_n}} \Core(\Gamma_{n-1})$.
In particular, for all $n' \geq n$, we have $B_{\Sch(\Gamma_{n'})}(v_1,R_{n-1}) \cong B_{\Sch(\Gamma_{n'})}(v_{g_n^{m_n}},R_{n-1})$.
By construction, we have $\Gamma \leq \Gamma_1 \leq \Gamma_2 \leq \dots$.
Set
$$\Delta := \bigcup_n \Gamma_n.$$
Also $\Core(\Delta) = \bigcup_n \Core(\Gamma_n)$ is an increasing union.
Now we see from Proposition~\ref{prop:boomerangschreier} that $\Delta$ is a boomerang subgroup in $F_d$, because for every non-trivial $g \in F_d$ and every $k \geq 1$ there are infinitely many $m \in \mathbb{N}$ with $g^m \in \{g_k,g_{k+1},\dots\}$.
\end{construction}

\begin{question}\label{qu:random}
	In joint work with Glasner and Hartnick, we are constructing elementwise conservative actions of the free group. Can Construction \ref{par:constr} be randomized, so that we obtain a  probability measure $\mu$ on $\Sub(F_d)$ such that the conjugation action on $(\Sub(F_d),\mu)$ is non-singular and elementwise conservative?
\end{question}

\subsection{Critical exponents and cores}

Let $\Gamma \leq F_d$. Recall that the critical exponent of $\Gamma$ is
$\delta(\Gamma) = \lim_{R \to \infty} \frac{\ln(c_\Gamma(R))}{R}$,
where 
\begin{align*}
	c_\Gamma(R) &= |\{\gamma \in \Gamma \mid |\gamma| \leq R\}| \\
	&= |\{g \in F_d \mid \omega_{g} \subset \Core(\Gamma) \text{ is a cycle based at } v_1\}|.
\end{align*}
Hence calculating critical exponents amounts to counting cycles in core graphs.

\begin{lemma}\label{lem:cRestimate}
	Let $\Gamma \leq F_d$ be a finitely generated subgroup. 
	Let $g \in F_d$ be an admissible connector for $\Core(\Gamma)$ with itself,
	and let $J$ be the length of the join segment.
%
	Then
	\[
	c_{\Gamma \ast g \Gamma g^{-1}}(R) \leq 
	\sum_{i=0}^{\left\lfloor \frac{R}{2J} \right\rfloor} \sum_{\substack{\alpha \in \N^{2i+1} \\ \sum_j \alpha_j = R + 2i(|g|-2J)}
	} \prod_{j=0}^{2i} c_\Gamma(\alpha_j).
	\]
\end{lemma}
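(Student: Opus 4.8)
The plan is to bound $c_{\Gamma \ast g \Gamma g^{-1}}(R)$ by combinatorially classifying the cycles based at the root $v_1$ in the glued core graph $\frakg := \Core(\Gamma) \sqcup_g \Core(\Gamma)$. By the cycle-counting description of the critical exponent, I must count reduced closed paths of length $\leq R$ starting and ending at $v_1$. The key structural observation is that the glued graph consists of two copies of $\Core(\Gamma)$ connected by a single join segment, which is a path of length $J$. Since this join path is a \emph{separating} arc (traversing it is the only way to move between the two copies), any closed path based at $v_1$ must cross this bridge an \emph{even} number of times, say $2i$ times, alternating between the two copies of $\Core(\Gamma)$.

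The main step is then to decompose each such cycle according to how it alternates. A cycle that crosses the bridge $2i$ times is split into $2i+1$ consecutive excursions: it starts in the first copy (an excursion returning to the bridge endpoint), crosses, makes an excursion in the second copy, crosses back, and so on, ending with a final excursion back to $v_1$. Each of these $2i+1$ excursions is itself a cycle based at a fixed vertex inside one of the two copies of $\Core(\Gamma)$, hence is counted by $c_\Gamma(\alpha_j)$ where $\alpha_j$ is its length. I would set up the bound $\prod_{j=0}^{2i} c_\Gamma(\alpha_j)$ for the number of ways to choose the excursions of prescribed lengths $\alpha_0, \dots, \alpha_{2i}$; the product form reflects that the excursions are chosen independently once the crossing pattern is fixed. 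Note $i$ ranges up to $\lfloor R/2J \rfloor$ because each of the $2i$ crossings contributes length at least $J$, so $2iJ \leq R$ forces $i \leq R/2J$.

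The remaining bookkeeping is the length constraint $\sum_j \alpha_j = R + 2i(|g| - 2J)$. Here is where I would be careful: the total length $R$ of the cycle is the sum of the $2i+1$ excursion lengths \emph{plus} the total length spent traversing the join segment during the $2i$ crossings. But the glued graph's connecting path has length $J$ (the join segment), while in the free product $\Gamma \ast g\Gamma g^{-1}$ a word traverses the full connector $g$ of length $|g|$, with the initial and terminal segments (each absorbed into a copy of $\Core(\Gamma)$) accounting for the difference. The offset $|g| - 2J$ per pair of crossings is an accounting correction converting between the intrinsic path length in $\frakg$ and the word length $|g|$ in $F_d$; I would verify this by tracking how the initial/terminal segments of $g$ overlap the core copies versus the bare join segment. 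Summing over all valid excursion-length tuples $\alpha \in \N^{2i+1}$ and all crossing numbers $i$ yields the stated upper bound.

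The hard part will be justifying the length bookkeeping cleanly, in particular pinning down the exact constant $R + 2i(|g| - 2J)$ rather than just its order of magnitude, since the definition of admissible connector carefully separates $g$ into initial, join, and terminal segments whose lengths must be reconciled against both the word length $|g|$ and the path length $J$ inside the glued core. Establishing that the inequality (rather than equality) is the correct direction — because distinct cycles might, after reduction, collapse or because we overcount configurations — is the other delicate point; I expect the $\leq$ arises precisely because the bound allows all excursion-length tuples without enforcing reducedness at the crossing junctions, thereby counting at least as many objects as there are genuine cycles.
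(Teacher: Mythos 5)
Your overall strategy is the same as the paper's: decompose a cycle in $\Core(\Gamma) \sqcup_g \Core(\Gamma)$ according to the number $2i$ of traversals of the join segment, bound $i$ by $\lfloor R/2J\rfloor$, and count the $2i+1$ excursions independently by $c_\Gamma$. But there is one genuine gap, and it sits exactly at the point you flag as "the hard part." As written, your claim that each excursion "is itself a cycle based at a fixed vertex inside one of the two copies of $\Core(\Gamma)$, hence is counted by $c_\Gamma(\alpha_j)$ where $\alpha_j$ is its length" is not correct: the excursions $p_1,\dots,p_{2i-1}$ (and the endpoints of $p_0,p_{2i}$ away from the root) are based at the endpoint of the initial segment of $g$ in $\frakg$, respectively at the endpoint of the terminal segment in $\frakg'$ --- not at the root of either copy. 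The quantity $c_\Gamma(\ell)$ counts reduced cycles of length $\leq\ell$ based at the \emph{root}; cycles of length $\ell$ based at another vertex $v_h$ are counted by $c_{h^{-1}\Gamma h}(\ell)$, which need not equal $c_\Gamma(\ell)$. So the product bound $\prod_j c_\Gamma(\alpha_j)$ does not follow directly from your decomposition, and neither does the precise constraint $\sum_j\alpha_j = R + 2i(|g|-2J)$ (your own accounting, "excursion lengths plus join traversals," would give $\sum_j\alpha_j = R - 2iJ$, which is a different and in fact smaller quantity).

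The missing idea, which resolves both problems at once, is to \emph{enlarge} each excursion (introducing backtracking, without changing the homotopy class) so that it becomes a closed path based at the root of its copy: prepend/append the initial segment (length $J_1$) to the excursions in $\frakg$ and the terminal segment (length $J_2$) to those in $\frakg'$, and simultaneously extend each bridge traversal $q$ to the full word $\omega_g$. After reduction, each enlarged excursion is an element of $\Gamma$ (resp.\ of $g\Gamma g^{-1}$ read from $v_g$), so it is legitimately counted by $c_\Gamma$ of its enlarged length. The total enlarged length is at most $R + 4i(|g|-J)$, of which $2i|g|$ is spent on the copies of $\omega_g^{\pm1}$; subtracting gives exactly $\sum_j \alpha_j \leq R + 4i(|g|-J) - 2i|g| = R + 2i(|g|-2J)$, which is the constant in the statement (and monotonicity of $c_\Gamma$ lets you replace "at most" by the exact sum). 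Without this enlargement step your argument does not produce the stated bound; with it, your outline becomes the paper's proof.
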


\begin{proof}
	In the graph $\Core(\Gamma) \sqcup_g \Core(\Gamma)$, we call $\frakg$ the copy of $\Core(\Gamma)$ containing the root, and $\frakg'$ the other copy.	
	A cycle based at the root that does not remain in $\frakg$ has to travel the join segment $p$ of $\omega_g$, which joins $\frakg$ and $\frakg'$, back and forth.
	Estimating the number of those cycles of length $R$ hence boils down to seeing how often we can travel this segment back-and-forth during that time, and how often we can loop around inside $\frakg$ and $\frakg'$ in between those travels.
	
	The number of times the path between $\frakg$ and $\frakg'$ can be traveled back-and-forth by a cycle of length $R$ is at most $\lfloor \frac{R}{2J} \rfloor$.
	
	Let now  $0 \leq i \leq \lfloor \frac{R}{2J} \rfloor$.
	Consider a cycle in $\Core(\Gamma) \sqcup_g \Core(\Gamma)$ of length at most $R$ based at the root that visits $\frakg'$ exactly $i$ times.
	We can write it as $(p_0,q,p_1,q^{-1},\dots,q^{-1},p_{2i})$, where $p_j$ is a path in $\frakg$ for $j$ even, a path in $\frakg'$ for $j$ odd, and $q$ is the unique path connecting $\frakg$ and $\frakg'$.
	Let $J_1$ be the length of the initial segment and $J_2$ the length of the terminal segment of $g$.
	
	We now enlarge the cycle (possibly introducing backtracking to not change its homotopy type) to write it as concatenation of a cycle in $\frakg$, followed by $\omega_g$, followed by a cycle in $\frakg'$, followed by $\omega_{g}$ backwards, etc.
	For this, we need to add edges as follows. To $p_0$ we add at most $J_1$ many edges at the end (so that it ends at the root), and to $p_{2i}$ we add at most $J_1$ many edges at the start (so that it starts at the root).
	Then, to each $q$ we add $J_1$ edges at the start and $J_2$ edges at the end to make it into $\omega_g$; and analogously we add $J_1 + J_2$ many edges to $q^{-1}$.
	Lastly, for each $0 < j < 2i$, if $j$ is even we enlarge $p_j$ by at most $2J_1$ many edges and if $j$ is odd we enlarge $p_j$ by at most $2J_2$ many edges.
	We obtained now a closed path 
	$(c_0,\omega_g,c_1,\omega_g^{-1},c_2,\omega_g,\dots,\omega_g^{-1},c_{2i})$ with backtracking,
	where each $c_j$ is a closed path that is based at the root $v_1$ if $j$ is even and based at $v_g$ if $j$ is odd.
	This closed path actually lives in $\Sch(\langle \Gamma, g \Gamma g^{-1}\rangle)$, since it might happen that $\omega_g$ is not contained in $C(\Gamma) \sqcup_g C(\Gamma)$, but this does not influence the rest of the argument.
	Using $|g| = J_1 + J + J_2$,
	the total length of this closed path is at most $R + 4i(|g|-J)$, of which $2i|g|$ is spent on the paths $g$ or $g^{-1}$.
	
	Then, the number of cycles that visits the second copy of $\frakg$ exactly $i$ times is smaller than or equal to
	\[
	\sum_{\substack{\alpha \in \N^{2i+1} \\ \sum_j \alpha_j = R + 4i(|g|-J) - 2i|g|}
	} \prod_{j=0}^{2i} c_\Gamma(\alpha_j).
	\]
	This finishes the proof.
\end{proof}

By Remark \ref{rem:finite core}, if $\Gamma \leq F_d$ is finitely generated but not cyclic,
we can apply the following theorem by Coornaert 
to estimate $c_\Gamma(R)$.

\begin{proposition}[{\cite[Th\'eor\`eme 7.1]{Coornaert1993}}]\label{prop:coornaert}
	Let $\Gamma$ be a group acting properly and cocompactly by
	isometries on a proper geodesic Gromov hyperbolic metric space $X$ whose boundary contains more than two points.
	Then there exists $K \geq 1$ such that for every $R \geq 0$
	\[
	\frac{1}{K} e^{\delta(\Gamma) R} \leq c_\Gamma(R) \leq K e^{\delta(\Gamma) R}.
	\]
\end{proposition}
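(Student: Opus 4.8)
The plan is to run the classical Patterson--Sullivan machinery on the boundary, which is also Coornaert's strategy. Fix a basepoint $x \in X$; properness and cocompactness make the orbit $\Gamma x$ a uniformly separated net in $X$ with finite stabilizers, so up to a bounded multiplicative error $c_\Gamma(R)$ equals $|\{\gamma \in \Gamma \mid d_X(x,\gamma x) \leq R\}|$. First I would construct a $\delta(\Gamma)$-conformal density on $\partial X$. Starting from the Poincar\'e series $\Pc(s) := \sum_{\gamma \in \Gamma} e^{-s\, d_X(x,\gamma x)}$, which converges for $s > \delta(\Gamma)$ and diverges as $s \downarrow \delta(\Gamma)$, I would form for each $s > \delta(\Gamma)$ the atomic probability measure on $\Gamma x$ giving $\gamma x$ the mass $\Pc(s)^{-1} e^{-s\, d_X(x,\gamma x)}$, and extract a weak-$*$ limit $\mu_x$ as $s \downarrow \delta(\Gamma)$ (inserting the usual Patterson slowly varying weight if $\Pc$ happens to converge at $\delta(\Gamma)$). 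Properness forces the limit to be supported on $\partial X$, a standard computation shows the family $(\mu_x)_{x \in X}$ is $\delta(\Gamma)$-conformal, i.e.\ $d\mu_x/d\mu_y(\xi) = e^{-\delta(\Gamma)\beta_\xi(x,y)}$ for the Busemann cocycle $\beta$, while cocompactness gives full support and non-elementarity (the boundary has more than two points) rules out atoms.

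The central step is Sullivan's shadow lemma. For $y \in X$ and $r > 0$ let $\Oc(x,y,r) \subset \partial X$ be the shadow of $B(y,r)$ seen from $x$, the set of endpoints of geodesic rays from $x$ that meet $B(y,r)$. I would prove that there exist $r_0$ and $C \geq 1$ such that for all $r \geq r_0$ and all $\gamma \in \Gamma$,
\[
C^{-1} e^{-\delta(\Gamma)\, d_X(x,\gamma x)} \leq \mu_x\big(\Oc(x,\gamma x,r)\big) \leq C\, e^{-\delta(\Gamma)\, d_X(x,\gamma x)}.
\]
This is the technical heart and the step I expect to be the main obstacle: it follows from the conformality relation by comparing $\beta_\xi(x,\gamma x)$ with $d_X(x,\gamma x)$ for $\xi$ in the shadow, which requires the uniform $\delta$-hyperbolic estimates relating shadows, Gromov products and Busemann functions in a general (not necessarily $\CAT(-1)$) hyperbolic space, together with the full-support and non-atomicity of $\mu_x$ to pin down the measure of a shadow from both sides.

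Finally I would convert the shadow lemma into the two-sided count by a covering-and-multiplicity argument, working on annuli $A_R := \{\gamma \mid R-1 < d_X(x,\gamma x) \leq R\}$. For the upper bound, uniform separation of the net and hyperbolicity imply that the shadows $\{\Oc(x,\gamma x,r) \mid \gamma \in A_R\}$ overlap with multiplicity bounded by some $N$ independent of $R$; since $\mu_x(\partial X)=1$, summing the lower bound of the shadow lemma over $A_R$ yields $|A_R| \leq C N e^{\delta(\Gamma)R}$, and summing the geometric series over $k \leq R$ gives $c_\Gamma(R) \leq K e^{\delta(\Gamma)R}$. For the lower bound, cocompactness ensures every geodesic ray from $x$ passes within a fixed distance $D$ of the orbit at every scale, so the shadows of the points in $A_R$ cover $\partial X$; comparing $\mu_x(\partial X)=1$ with the upper bound of the shadow lemma forces $|A_R| \geq K^{-1} e^{\delta(\Gamma)R}$, whence $c_\Gamma(R) \geq |A_R| \geq K^{-1} e^{\delta(\Gamma)R}$. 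Enlarging $K$ to absorb the finitely many small values of $R$ then completes the proof.
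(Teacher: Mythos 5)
The paper offers no proof of this proposition---it is quoted directly from Coornaert---and your outline follows exactly the route Coornaert's own proof takes: build a $\delta(\Gamma)$-conformal Patterson--Sullivan density on $\partial X$, prove Sullivan's shadow lemma, and convert it into a two-sided orbit count by a covering/bounded-multiplicity argument over annuli. The sketch is correct as far as it goes; the only details worth pinning down are that the annuli must be taken of width comparable to the cocompactness constant (not width $1$) for the shadows to cover $\partial X$ in the lower bound, and that summing the geometric series in the upper bound uses $\delta(\Gamma)>0$, which is guaranteed since a cocompact group on a space with $|\partial X|>2$ is non-elementary.
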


The constant in this proposition depends on $\Gamma$.

\begin{remark}
	For $\Gamma \leq F_d$ we have $\delta(\Gamma) = 0$ if and only if $\Gamma$ is cyclic.
	If $\Gamma$ is the trivial group, then $c_g(R)=1$ for all $R$; otherwise let $\Gamma = \langle g \rangle$ and assume for simplicity that $g$ is cyclically reduced, so $\Core(\Gamma)$ is just a cycle of length $|g|$.
	Then $c_\Gamma(R) = 2 \cdot \lfloor \frac{R}{|g|} \rfloor + 1$.
\end{remark}

Now we can estimate the critical exponent of subgroups of the form $\Gamma \ast g \Gamma g^{-1}$.

\begin{proposition}\label{prop:exponentestimate}
	Let $\Gamma \leq F_d$ be a finitely generated subgroup.
	Let $g \in F_d$ be an admissible connector for $\Core(\Gamma)$ with itself.
	Denote by $J$ the length of the join segment of $g$.
	\begin{enumerate}
		\item Assume $\Gamma$ is not cyclic and that $2J > |g|$.
		Then $$\delta(\Gamma) \leq \delta(\langle \Gamma, g \Gamma g^{-1}\rangle) \leq \delta(\Gamma) + K e^{-\delta(\Gamma)(2J-|g|)},$$ where $K$ is the constant from Proposition \ref{prop:coornaert}.
		\item Assume $\Gamma = \langle \gamma \rangle$ is cyclic and nontrivial. 
		Assume $\gamma$ is cyclically reduced and no prefix of $\gamma$ or $\gamma^{-1}$ is a prefix of $g$, and no suffix of $\gamma$ or $\gamma^{-1}$ is a suffix of $g$.
		Assume in addition that $|g|$ is divisible by $|\gamma|$. Then
		\[
		\frac{1}{|\gamma||g|} \leq \delta(\langle \Gamma, g \Gamma g^{-1}\rangle) \leq \frac{1}{|\gamma|\sqrt{|g|}}.		
		\]
	\end{enumerate}
\end{proposition}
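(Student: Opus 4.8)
The plan is to treat the two cases separately, using Lemma~\ref{lem:cRestimate} as the fundamental counting tool in both.

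\paragraph*{Case (1), $\Gamma$ not cyclic.} The lower bound $\delta(\Gamma) \leq \delta(\langle \Gamma, g\Gamma g^{-1}\rangle)$ is immediate, since $\Gamma \leq \langle \Gamma, g\Gamma g^{-1}\rangle$ implies $c_\Gamma(R) \leq c_{\langle \Gamma, g\Gamma g^{-1}\rangle}(R)$ for all $R$, and the critical exponent is monotone under inclusion. For the upper bound, I would start from the estimate in Lemma~\ref{lem:cRestimate} and feed in Coornaert's bound $c_\Gamma(\alpha_j) \leq K e^{\delta(\Gamma)\alpha_j}$ from Proposition~\ref{prop:coornaert} (valid since $\Gamma$ is noncyclic and finitely generated, hence acts properly cocompactly on its minimal subtree, whose boundary is infinite). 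The product $\prod_{j=0}^{2i} c_\Gamma(\alpha_j)$ then becomes at most $K^{2i+1} e^{\delta(\Gamma)\sum_j \alpha_j} = K^{2i+1} e^{\delta(\Gamma)(R + 2i(|g|-2J))}$, which is \emph{constant} over the inner sum. So the inner sum over compositions $\alpha \in \N^{2i+1}$ with fixed total contributes only the number of such compositions, which is a binomial coefficient $\binom{R+2i(|g|-2J)+2i}{2i}$, polynomially bounded in $R$ for each fixed $i$. The key point is that $|g|-2J < 0$ by hypothesis $2J>|g|$, so each increment of $i$ multiplies the bound by a factor $K^2 e^{\delta(\Gamma)\cdot 2(|g|-2J)} = K^2 e^{-2\delta(\Gamma)(2J-|g|)}$, a geometric decay. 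Summing the resulting geometric-type series over $i$ and extracting the exponential rate $\lim_R \frac{1}{R}\ln c_{\langle\Gamma,g\Gamma g^{-1}\rangle}(R)$, the dominant contribution is $e^{\delta(\Gamma)R}$ times a convergent sum, and the correction to the exponent is governed by the decay factor, yielding $\delta(\Gamma) + K e^{-\delta(\Gamma)(2J-|g|)}$ after a logarithmic estimate of the form $\ln(1+x)\leq x$.

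\paragraph*{Case (2), $\Gamma = \langle\gamma\rangle$ cyclic.} Here Coornaert does not apply since $\delta(\Gamma)=0$; instead I would use the explicit formula $c_\Gamma(\alpha)=2\lfloor \alpha/|\gamma|\rfloor+1$ from the preceding remark, which is linear in $\alpha$. The hypotheses on prefixes and suffixes guarantee that $g$ is an admissible connector with initial and terminal segments of length $0$, so $J=|g|$ and the join segment is all of $g$; combined with $|\gamma|$ dividing $|g|$, the core $\Core(\Gamma)\sqcup_g\Core(\Gamma)$ is a figure consisting of two cycles of length $|\gamma|$ joined by a path, and $\langle\Gamma,g\Gamma g^{-1}\rangle \cong \Z \ast \Z = F_2$ embedded in $F_d$. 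Plugging the linear bound $c_\Gamma(\alpha_j)\approx 2\alpha_j/|\gamma|$ into Lemma~\ref{lem:cRestimate}, the product $\prod_{j=0}^{2i} c_\Gamma(\alpha_j)$ is at most $(2/|\gamma|)^{2i+1}$ times a product of the $\alpha_j$, and summing over compositions of a total $\approx R$ (since $|g|-2J=-|g|$ makes the total $R-2i|g|$) I expect the dominant term to come from balancing $i$ against $R$. The growth rate of $c_{\langle\Gamma,g\Gamma g^{-1}\rangle}(R)$ is then controlled by how many times, in length $R$, one can traverse the connecting path of length $|g|$ and wind around the two short cycles; optimizing the number of traversals $i\sim \sqrt{R}/\sqrt{\text{stuff}}$ against the polynomial winding factors produces a subexponential growth of the form $e^{c\sqrt{R}/(|\gamma|\sqrt{|g|})}$, whence the claimed two-sided bound on $\delta$ after dividing by $R$ and taking limits --- or rather, since the growth is subexponential, $\delta=0$ in the strict limit, so I suspect the statement is really a bound on a \emph{finite-scale} or renormalized exponent; I would double-check the intended meaning of $\delta$ here before finalizing, as this is the subtle point.

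\paragraph*{Main obstacle.} The genuinely delicate part is the saddle-point / optimization estimate in case (2): extracting the sharp $\sqrt{|g|}$ dependence requires carefully balancing the number of path-traversals $i$ against the polynomial (rather than exponential) contributions of the cyclic pieces, and getting both the upper and lower bounds to match the advertised constants $1/(|\gamma||g|)$ and $1/(|\gamma|\sqrt{|g|})$. For the \emph{lower} bound in case (2) I would additionally need to exhibit \emph{enough} cycles directly --- counting explicit words of the form (wind in first cycle)$\cdot g \cdot$(wind in second cycle)$\cdot g^{-1}\cdots$ --- rather than relying on Lemma~\ref{lem:cRestimate}, which only gives an upper bound; this constructive lower-bound count, together with pinning down the precise asymptotic regime that makes the $\sqrt{|g|}$ appear, is where I expect to spend the most effort.
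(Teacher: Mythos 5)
Your treatment of part (1) is essentially the paper's argument: plug Coornaert's bound into Lemma~\ref{lem:cRestimate}, observe that the summand is constant on each composition class so the inner sum is a binomial coefficient, and finish with $\ln(1+x)\leq x$. One caution: the sum over $i$ is \emph{not} a convergent geometric series independent of $R$ --- both the binomial coefficients and the range of $i$ grow with $R$, and the paper must bound the whole expression by $K e^{\delta(\Gamma)R}\sum_{j=0}^{R}K^{j}e^{j\delta(\Gamma)(|g|-2J)}\binom{R}{j}=Ke^{\delta(\Gamma)R}\bigl(1+Ke^{\delta(\Gamma)(|g|-2J)}\bigr)^{R}$. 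It is this $R$-th power, not a convergent constant, that produces the additive correction to the exponent; your phrase ``a convergent sum'' would, taken literally, yield $\delta(\Gamma')=\delta(\Gamma)$ exactly, which is too strong. Still, you land on the right final form, so part (1) is fine.

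Part (2) contains a genuine gap. The group $\langle\Gamma,g\Gamma g^{-1}\rangle\cong\Z\ast\Z$ is free of rank $2$, so its growth is exponential and its critical exponent is strictly positive; your conclusion that the cycle count grows like $e^{c\sqrt{R}}$ and hence $\delta=0$ ``in the strict limit'' is false, and your ensuing doubt about the ``intended meaning of $\delta$'' signals that the optimization went wrong. In the balance between the number $i$ of bridge traversals and the polynomial winding factors, the optimal $i$ is proportional to $R$, not to $\sqrt{R}$; the $\sqrt{|g|}$ in the statement appears in the dependence of the resulting \emph{positive} rate on $|g|$, not as a $\sqrt{R}$ in the exponent of the growth function. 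The paper avoids this computation entirely: it first reduces to $|\gamma|=1$, i.e.\ $\Gamma=\langle a_1\rangle$, quotes the result of Kwon--Park that $\delta(\langle a_1,ga_1g^{-1}\rangle)=\ln(x_n^{-1})$ where $x_n$ is the unique real root of an explicit polynomial $P_n$ with $n=|g|$, locates $x_n$ between $e^{-1/\sqrt{n}}$ and $e^{-1/n}$ by the intermediate value theorem, and then treats general $|\gamma|$ by rescaling the metric by $|\gamma|$, which divides the critical exponent by $|\gamma|$. This also delivers the lower bound with no direct cycle count, so the constructive count you flag as the main obstacle is never needed; your route for (2) would have to be rebuilt from scratch or replaced by the citation.
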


\begin{proof}
	Write $\Gamma' := \langle \Gamma, g \Gamma g^{-1}\rangle \cong \Gamma \ast g \Gamma g^{-1}$.
	
	(1) Plugging Proposition \ref{prop:coornaert} into Lemma \ref{lem:cRestimate} we obtain
	\begin{align*}
		c_{\Gamma'}(R) &\leq \sum_{i=0}^{\left\lfloor \frac{R}{2J} \right\rfloor} \sum_{\substack{\alpha \in \N^{2i+1} \\ \sum_j \alpha_j = R + 2i(|g|-2J)}
		} \prod_{j=0}^{2i} c_\Gamma(\alpha_j) \\
		&\leq \sum_{i=0}^{\left\lfloor \frac{R}{2J} \right\rfloor} \sum_{\substack{\alpha \in \N^{2i+1} \\ \sum_j \alpha_j = R + 2i(|g|-2J)}
		} \prod_{j=0}^{2i} K e^{\delta(\Gamma) \alpha_j} \\
		&= 	\sum_{i=0}^{\left\lfloor \frac{R}{2J} \right\rfloor} K^{2i+1} e^{\delta(\Gamma) (R + 2i(|g|-2J))} \sum_{\substack{\alpha \in \N^{2i+1} \\ \sum_j \alpha_j = R + 2i(|g|-2J)}} 1 \\
		&= e^{\delta(\Gamma)R} \sum_{i=0}^{\left\lfloor \frac{R}{2J} \right\rfloor} K^{2i+1} e^{ 2i \delta(\Gamma)(|g|-2J)} \binom{R + 2i(|g|-2J) + 2i}{2i} \\
		&\leq K e^{\delta(\Gamma)R} \sum_{j=0}^{R} K^{j} e^{ j \cdot \delta(\Gamma)(|g|-2J)} \binom{R}{j} \\
		&= K e^{\delta(\Gamma)R} (1 + Ke^{\delta(\Gamma)(|g|-2J)})^R.
	\end{align*}
	Hence
	\begin{align*}
		\delta(\Gamma') &\leq \lim_{R \to \infty} \frac{\ln(K e^{\delta(\Gamma)R} (1 + Ke^{\delta(\Gamma)(|g|-2J)})^R)}{R} \\
		&= \delta(\Gamma) + \ln(1 + Ke^{\delta(\Gamma)(|g|-2J)}) \\
		&\leq \delta(\Gamma) + Ke^{\delta(\Gamma)(|g|-2J)}.
	\end{align*}
	
	(2) 
	%
	We first assume that $|\gamma|=1$, so without loss of generality $\Gamma = \langle a_1 \rangle$. 
	Let $n = |g|$.
	Consider the polynomial
	\[
	P_n(t) := \begin{cases}
					2 t^n + t - 1 & n \text{ odd} \\
					2 \frac{t^{n}+t}{t+1} - 1 & n \text{ even}.
			  \end{cases}
	\]
	Kwon and Park showed in \cite{KwonPark2018,Kwon2019} that $\delta(\Gamma') = \ln(x_n^{-1})$, where $x_n$ is the unique real root of $P_n(t)$.
	Now we can compute using the intermediate value theorem that
	$ e^{- \frac{1}{\sqrt{n}}} \leq x_n \leq e^{-\frac{1}{n}}$ and hence
	\[
	\frac{1}{n} \leq \delta(\Gamma') \leq \frac{1}{\sqrt{n}}.
	\]
	For the general case,
	using the definition of the critical exponent with the Poincar\'e series, it is easy to see that if we scale the metric by a factor $\lambda > 0$, then the critical exponent of a subgroup gets divided by $\lambda$.
	Hence, if we scale the above situation by the factor $|\gamma|$ we get
	\[
	\frac{1}{|\gamma||g|} \leq \delta(\Gamma') \leq \frac{1}{|\gamma|\sqrt{|g|}}.
	\]
\end{proof}

\section{Proof of the main theorem}

As an important tool we need that the critical exponent is lower semi-continuous.

\begin{proposition}[{\cite[Proposition 10.4]{GekhtmanLevit2023}}]\label{prop:liminf}
	Let $(\Gamma_k)$ be a converging sequence of subgroups and $\lim_{k \to \infty} \Gamma_k = \Gamma$. Then
	$$\liminf_{k \to \infty} \delta(\Gamma_k) \geq \delta(\Gamma).$$
	In particular, if $\Gamma_1 \leq \Gamma_2 \leq \dots$ then $\lim_{k \to \infty} \delta(\Gamma_k) = \delta(\Gamma)$.
\end{proposition}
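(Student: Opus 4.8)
The plan is to argue directly from the combinatorial description $\delta(\Gamma) = \lim_{R \to \infty} \frac{\ln c_\Gamma(R)}{R}$ together with the fact that convergence in the Chabauty space is equivalent, via the Schreier-graph picture, to convergence of balls around the root. The key observation is that for a \emph{fixed} radius $R$, the quantity $c_\Gamma(R)$ depends only on the ball $B_{\Sch(\Gamma)}(v_1, R)$, since $c_\Gamma(R)$ counts reduced words $\gamma$ with $|\gamma| \le R$ whose path $\omega_\gamma$ is a cycle based at $v_1$, and such a path never leaves the ball of radius $R$. Hence $c_\Gamma(R)$ is \emph{locally constant} on $\Sub(F_d)$: if $B_{\Sch(\Gamma_k)}(v_1,R) \cong B_{\Sch(\Gamma)}(v_1,R)$ as rooted labeled graphs, then $c_{\Gamma_k}(R) = c_\Gamma(R)$.

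First I would fix $\epsilon > 0$ and choose $R_0$ large enough that $\frac{\ln c_\Gamma(R_0)}{R_0} \ge \delta(\Gamma) - \epsilon$; this is possible because the Fekete limit defining $\delta(\Gamma)$ is approached from values that are eventually at least $\delta(\Gamma) - \epsilon$ (indeed, by superadditivity of $\ln c_\Gamma$, the limit equals the supremum of $\frac{\ln c_\Gamma(R)}{R}$ up to the usual Fekete bookkeeping, so large radii suffice). Next, since $\Gamma_k \to \Gamma$, for all sufficiently large $k$ the ball of radius $R_0$ in $\Sch(\Gamma_k)$ agrees with that of $\Sch(\Gamma)$, so $c_{\Gamma_k}(R_0) = c_\Gamma(R_0)$. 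Then for each such $k$,
\[
\delta(\Gamma_k) = \lim_{R \to \infty} \frac{\ln c_{\Gamma_k}(R)}{R} \ge \frac{\ln c_{\Gamma_k}(R_0)}{R_0} = \frac{\ln c_\Gamma(R_0)}{R_0} \ge \delta(\Gamma) - \epsilon,
\]
where the first inequality uses that the Fekete limit of the superadditive sequence $\ln c_{\Gamma_k}$ dominates any single normalized term. Taking $\liminf_{k \to \infty}$ and then letting $\epsilon \to 0$ yields $\liminf_k \delta(\Gamma_k) \ge \delta(\Gamma)$.

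The one point requiring care — and the likely main obstacle — is the inequality $\frac{\ln c_{\Gamma_k}(R_0)}{R_0} \le \delta(\Gamma_k)$, i.e. that a single normalized term is a \emph{lower} bound for the limit. This is exactly the direction supplied by Fekete's lemma for superadditive sequences: one checks $c_\Gamma(R+S) \ge c_\Gamma(R)\,c_\Gamma(S)$ (concatenating a cycle of length $\le R$ with one of length $\le S$ gives a cycle of length $\le R+S$, and distinct pairs give distinct reduced words after cancellation is accounted for — this needs a brief check that the concatenation map is injective, which holds because the two factor words can be recovered, or alternatively one restricts to an obviously injective subfamily), so that $\frac{\ln c_\Gamma(R)}{R}$ converges to its supremum and every term lies below the limit. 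For the final ``In particular'' statement, when $\Gamma_1 \le \Gamma_2 \le \cdots$ with union $\Gamma = \bigcup_k \Gamma_k$, monotonicity gives $c_{\Gamma_k}(R) \le c_\Gamma(R)$ for every $R$, whence $\delta(\Gamma_k) \le \delta(\Gamma)$ and $\limsup_k \delta(\Gamma_k) \le \delta(\Gamma)$; combined with the lower semicontinuity $\liminf_k \delta(\Gamma_k) \ge \delta(\Gamma)$ already proved, and noting $\Gamma_k \to \Gamma$ in the Chabauty topology for an increasing union, this forces $\lim_k \delta(\Gamma_k) = \delta(\Gamma)$.
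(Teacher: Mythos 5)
The fatal step is exactly the one you flagged: the inequality $\frac{\ln c_{\Gamma_k}(R_0)}{R_0} \le \delta(\Gamma_k)$ is false in general, because $\ln c_\Gamma$ is \emph{not} superadditive. The concatenation map $(\gamma_1,\gamma_2)\mapsto\gamma_1\gamma_2$ is badly non-injective (already $(\gamma,1)$ and $(1,\gamma)$ collide), and no amount of ``accounting for cancellation'' rescues the inequality $c_\Gamma(R+S)\ge c_\Gamma(R)c_\Gamma(S)$: for $\Gamma=\langle a_1\rangle$ one has $c_\Gamma(R)=2R+1$, so $c_\Gamma(R+S)=2R+2S+1 < (2R+1)(2S+1)$. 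The same example kills the conclusion you draw from it: if some $\Gamma_k$ is infinite cyclic then $\delta(\Gamma_k)=0$ while $\frac{\ln c_{\Gamma_k}(R_0)}{R_0}=\frac{\ln(2R_0+1)}{R_0}>0$, so a single normalized term is \emph{not} a lower bound for the critical exponent. (Note that Fekete's lemma, as invoked in the paper for the existence of the limit, would go through sub-, not super-, additivity, which yields upper bounds from single terms --- the opposite of what you need.) The underlying issue is genuine: knowing that $\Sch(\Gamma_k)$ has many cycles of length $\le R_0$ at the root does not by itself force exponential growth of the cycle count at larger radii; some argument must propagate the short cycles to long ones.

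The standard repair is to replace ``the number of elements of $\Gamma_k$ in a ball'' by ``the subgroup they generate''. Chabauty convergence gives, for each $R_0$ and all large $k$, the equality of sets $\Gamma_k\cap B_\tree(1,R_0)=\Gamma\cap B_\tree(1,R_0)$, hence $H_{R_0}:=\langle \Gamma\cap B_\tree(1,R_0)\rangle\le\Gamma_k$ and $\delta(\Gamma_k)\ge\delta(H_{R_0})$ by monotonicity. One then needs the nontrivial fact that $\delta(\Gamma)=\sup_{R_0}\delta(H_{R_0})$, i.e.\ that the critical exponent of a subgroup of $F_d$ is the supremum of the critical exponents of its finitely generated subgroups (equivalently, that the exponential growth rate of cycles at the root of $\Core(\Gamma)$ is approximated by finite subgraphs, a Gurevich-entropy-type statement, or a free-product/ping-pong argument \`a la Dal'bo--Otal--Peign\'e). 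This is the real content of the proposition and is precisely what your Fekete step was standing in for; it cannot be obtained from the counting function at a single radius. Your reduction of the ``in particular'' clause to lower semicontinuity plus monotonicity is fine, but note the paper itself does not prove this proposition --- it cites \cite{GekhtmanLevit2023} --- so any self-contained argument must supply the ingredient above.
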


\begin{proof}[Proof of Theorem \ref{thm:main}]
	If $\Gamma \leq F_d$ has finite index, there is nothing to show.
	Without loss of generality, assume that there exists $R > 0$ such that
	$O = \{\Gamma' \leq F_d \mid B_{\Sch(\Gamma')}(v_1,R) \cong B_{\Sch(\Gamma)}(v_1,R)\}$.
	Also, by adding a suitable generator if necessary using Proposition \ref{prop:exponentestimate}(2), we can assume without loss of generality that $\Gamma$ is not cyclic.

	Enumerate the elements $F_d = \{g_1,g_2,\dots\}$; assume that $\langle g_1 \rangle \cap \Gamma = \{1\}$.	
	We perform Construction \ref{par:constr}, and also use the notation from there.
	Choosing $m_1$ large enough, we can ensure that $\Gamma_n \in O$ for all $n \geq 1$.
	Moreover, for every $n \geq 1$, by Proposition \ref{prop:exponentestimate}, we can ensure that $m_n$ is large enough such that $\delta(\Gamma_n) < \delta(\Gamma_{n-1}) + \frac{\epsilon}{2^n}$.
	Then by Proposition \ref{prop:liminf} we have $\delta(\Delta) < \delta(\Gamma) + \epsilon$.
%
%
%
%

\end{proof}

\begin{proof}[Proof of Corollary \ref{cor:confined}]
	The first part is clear from the above proof.
	The second follows with Theorem \ref{thm:GekhtmanLevit} and Proposition \ref{prop:liminf}.
\end{proof}

\bibliographystyle{alpha}
\bibliography{/home/waldi/Nextcloud/work/references}
\end{document}